\newtheorem{theorem}{Theorem}[section]
\newtheorem{lemma}{Lemma}[section]
\newtheorem{corollary}{Corollary}[section]
\newtheorem{definition}{Definition}[section]
\newtheorem{proposition}{Proposition}[section]
\newtheorem{remark}{Remark}[section]
\newtheorem{assumption}{Assumptions}[section]
\begin{document}
\title
{Convergent Iteration in Sobolev Space 
for Time Dependent Closed Quantum
Systems}
\author{Joseph W. Jerome\footnotemark[1]}
\date{}
\maketitle
\pagestyle{myheadings}
\markright{Convergent Iteration in Sobolev Space}
\medskip

\vfill
{\parindent 0cm}

{\bf 2010 AMS classification numbers:} 
35Q41; 47D08; 47H09; 47J25; 81Q05.  

\bigskip
{\bf Key words:} 
Time dependent quantum systems; 
time-ordered evolution operators; Newton iteration; quadratic convergence. 

\fnsymbol{footnote}
\footnotetext[1]{Department of Mathematics, Northwestern University,
                Evanston, IL 60208. \\In residence:
George Washington University,
Washington, D.C. 20052\\
{URL: {\tt {http://www.math.northwestern.edu/$\sim$jwj}}} $\;$ 
{Email: {\tt {jwj620@gwu.edu}}}
}

\begin{abstract}
Time dependent quantum systems have become indispensable in science and
its applications, particularly at the atomic and molecular levels.
Here, we discuss the approximation of closed 
time dependent quantum systems on bounded domains, via 
iterative methods in Sobolev space based upon evolution operators. 
Recently, existence and uniqueness of weak solutions were demonstrated
by a contractive fixed point mapping defined by the evolution operators. 
Convergent successive approximation is then guaranteed. 
This article uses the same mapping to define 
quadratically convergent Newton and approximate Newton methods.
Estimates for the constants used in the convergence estimates are provided.
The evolution operators are ideally suited to serve as the
framework for this operator approximation theory, 
since the Hamiltonian is time-dependent. In addition, the 
hypotheses required to guarantee 
quadratic convergence of the Newton iteration build naturally upon the
hypotheses used for the existence/uniqueness theory. 
\end{abstract}

\section{Introduction}
Time dependent density functional theory (TDDFT) dates from the seminal
article \cite{RG}. 
A current account of the subject 
and a development of the mathematical model may be found in \cite{U}. 
For the earlier density functional theory (DFT), 
we refer the reader to \cite{KS,KV}. 
Our focus in this article is TDDFT only.
Closed quantum systems on bounded domains of
${\mathbb R}^{3}$ were analyzed in \cite{JP,J1}, 
via time-ordered evolution operators. The article \cite{JP} includes 
simulations 
based on approximations of the evolution operator, employing a spectral 
algorithm. 
The article \cite{J1} extended the existence results of \cite{JP} 
to include weak
solutions via a strict contraction argument for an operator $K$;
\cite{J1} also includes the exchange-correlation component of the 
Hamiltonian potential not included in \cite{JP}.
TDDFT is a significant field for applications (see 
\cite{CMR,tddft,YB,LHP,MG}), including the expanding field of chemical
physics, which studies the response of atoms and molecules to external
stimuli. 
By permitting time dependent potentials,
TDDFT extends the nonlinear Schr\"{o}dinger equation, 
which has been studied extensively (\cite{CH,Caz}). 

In this article, we build upon \cite{J1}  by introducing a Newton iteration 
argument for $I -K$. This is examined at several levels, including
classical Newton iteration and also 
that of approximate Newton iteration, defined by residual
estimation.
It is advantageous that the existence/uniqueness theory of \cite{J1} 
employs strict contraction on a domain with an `a priori' norm bound. 
One consequence is that the local requirements of Newton's method can, in
principle, be satisfied by preliminary Picard iteration (successive
approximation).
The results of this article should be viewed as advancing understanding
beyond that of an existence/uniqueness theory; 
they are directed toward ultimately
identifying a successful constructive approach to obtaining solutions.
In the following subsections of the introduction, we familiarize the reader with
the model, and summarize the basic results of \cite{J1}, 
which serve as the starting point for the present article.
In this presentation, we provide explicit estimates 
for the domain and contraction constant used for the application of the
Banach contraction theorem. 
Section two cites and derives essential operator results regarding Newton
iteration in Banach spaces. 
Section three is devoted to an exact Newton iteration, with
quadratic convergence, for the quantum  TDDFT model, whereas section four
considers an approximate quadratically convergent Newton
iteration for the TDDFT model. 
Section five is a brief `Conclusion' section.
Appendices are included, which state the hypotheses used in \cite{J1} 
(Appendix A), 
basic definitions of the norms and function spaces adopted for this
article (Appendix B), and a general Banach space lemma
characterizing quadratic convergence for approximate Newton iteration
(Appendix C). 

The constants which appear in the analysis are directly 
related to the components of the potential. The external potential and the
Hartree potential present no problem.
However, as observed in \cite{U}, there is no explicit universally 
accepted representation of the exchange-correlation potential, which is
required to be non-local in our approach. 
It follows that the explicit convergence estimates we present are 
strongest in the absence of this potential, or 
in the case when concrete approximations are employed. 
\subsection{The model}
TDDFT includes an external potential, the Hartree potential, 
and the exchange-correlation potential. 
If $\hat H$ denotes
the Hamiltonian operator of the system, then the state $\Psi(t)$ of the
system obeys the nonlinear Schr\"{o}dinger equation,
\begin{equation}
\label{eeq}
i \hbar \frac{\partial \Psi(t)}{\partial t} = \hat H \Psi(t).
\end{equation}
Here, $\Psi = \{\psi_{1}, \dots, \psi_{N}\}$ and the charge
density $\rho$ is defined by 
$$ \rho({\bf x}, t) = |\Psi({\bf x}, t)|^{2} = 
\sum_{k = 1}^{N} |\psi_{k} ({\bf x}, t)|^{2}.
$$ 
For well-posedness, an initial condition,
\begin{equation}
\label{ic}
\Psi(0) = \Psi_{0}, 
\end{equation}
consisting of $N$ orbitals,
and boundary conditions must be adjoined. 
We will assume that the particles are confined to a
bounded region $\Omega \subset {\mathbb R}^{3}$ and that
homogeneous Dirichlet boundary conditions hold for the evolving quantum
state within a closed system. In general, $\Psi$ denotes a finite vector
function of space and time. 
\subsection{Specification of the Hamiltonian operator}
We study effective potentials $V_{\rm e}$ 
which are real scalar functions of the form,
$$
V_{\rm e} ({\bf x},t, \rho) = V({\bf x}, t) + 
W \ast \rho + \Phi({\bf x}, t, \rho).
$$
Here, 
$W({\bf x}) = 1/|{\bf x}|$ 
and the convolution
$W \ast \rho$ 
denotes the Hartree potential. If $\rho$ is extended as zero outside
$\Omega$, then, for ${\bf x} \in \Omega$, 
$$
W \ast \rho \; ({\bf x})=\int_{{\mathbb R}^{3}} 
W({\bf x} -{\bf y}) \rho({\bf y})\;d {\bf y},
$$
which depends only upon values $W({\xi})$,
$\|{\xi}\|\leq 
\mbox{diam}(\Omega)$. We may redefine $W$ 
smoothly outside this set,
so as to obtain a function of compact support for which Young's inequality
applies. 
$\Phi$ represents a time history of $\rho$:
$$
\Phi({\bf x}, t, \rho)= \Phi_{0}({\bf x}, 0, \rho) + 
\int_{0}^{t} \phi({\bf x}, s, \rho) \; ds.
$$  
As explained in \cite[Sec.\thinspace 6.5]{U}, $\Phi_{0}$ 
is determined by
the initial state of the Kohn-Sham system and the initial state of the
interacting reference system with the same density and divergence of the
charge-current.
The Hamiltonian operator is given by, for effective
mass $m$ and normalized Planck's constant $\hbar$,
\begin{equation}
\hat H  
= -\frac{\hbar^{2}}{2m} \nabla^{2} 
 +V({\bf x}, t) + 
W \ast \rho + \Phi({\bf x}, t, \rho).
\label{Hamiltonian}
\end{equation}
\subsection{Definition of weak solution}
The solution $\Psi$ is continuous from the time interval $J$, to be
defined shortly,  
into the finite energy Sobolev space
of complex-valued 
vector functions which vanish in a generalized sense on the boundary, 
denoted $H^{1}_{0}(\Omega)$. One writes: $\Psi \in C(J; H^{1}_{0})$. 
The time derivative is continuous from $J$ 
into the dual $H^{-1}$ of $H^{1}_{0}$. 
One writes: $\Psi \in C^{1}(J; H^{-1})$.  
The spatially dependent test functions $\zeta$ are
arbitrary in $H^{1}_{0}$. 
The duality bracket is denoted $\langle f, \zeta \rangle$. 
\begin{definition}
For $J=[0,T]$,  
the vector-valued function $\Psi = \Psi({\bf x}, t)$ is a  
weak solution of (\ref{eeq}, \ref{ic}, \ref{Hamiltonian}) if 
$\Psi \in C(J; H^{1}_{0}(\Omega)) \cap C^{1}(J;
H^{-1}(\Omega)),$ if $\Psi$ satisfies the initial condition 
(\ref{ic}) for $\Psi_{0} \in H^{1}_{0}$, 
and if $\forall \; 0 < t \leq T$: 
\vspace{.25in}
\begin{equation}
i \hbar\langle \frac{\partial\Psi(t)}{\partial t},
\zeta \rangle  = 
\int_{\Omega} \frac{{\hbar}^{2}}{2m}
\nabla \Psi({\bf x}, t)\cdotp \nabla { \zeta}({\bf x}) 
+ V_{\rm e}({\bf x},t,\rho) \Psi({\bf x},t) { \zeta}({\bf x})
d{\bf x}. 
\label{wsol}
\end{equation}
\end{definition}
\subsection{The associated linear problem}
The approach to solve the nonlinear problem (\ref{wsol}) is to define a
fixed point mapping $K$. For each $\Psi^{\ast}$ in the domain
$C(J;H^{1}_{0})$ of $K$ 
we produce the image $K \Psi^{\ast} = \Psi$ by the following
steps.  
\begin{enumerate}
\item
$\Psi^{\ast} \mapsto \rho =
 \rho({\bf x}, t) = |\Psi^{\ast}({\bf x}, t)|^{2}$.
\item
$\rho \mapsto \Psi$ by the solution of the 
{\it associated} linear problem (\ref{wsol})
where the potential $V_{\rm e}$ uses $\rho$ 
in its final argument.
\end{enumerate}
In general, $\Psi \not= \Psi^{\ast}$ unless $\Psi$ is a fixed
point of $K$. 

In order to construct a fixed point, one introduces the linear evolution
operator $U(t,s)$: for given $\Psi^{\ast}$ in
$C(J;H^{1}_{0})$, 
set $U(t,s) = U^{\rho}(t,s)$ so that
\begin{equation}
\label{evolution} 
\Psi(t) = U^{\rho}(t,0) \Psi_{0}. 
\end{equation}
For each $t$, we interpret $\Psi(t)$ 
as a function (of ${\bf x}$).
Moreover, the effect of the evolution operator is to obtain 
$\Psi = K \Psi^{\ast}$ since the operator is generated
from (\ref{wsol}) with specified $\rho$.
\subsection{Discussion of the evolution operator}
The evolution operator used here and in \cite{J1}
was introduced in two fundamental
articles \cite{K1,K2} 
by Kato in the 1970s. A description of Kato's theory can be
found in \cite{J2}. For the application to (\ref{wsol}), one identifies 
the frame space
with the dual space $H^{-1}$ and the smooth space with the finite energy space
$H^{1}_{0}$. A significant step is to show that the operators 
$(-i/{\hbar}) \hat H(\rho)$ generate contraction semigroups on the frame space,
which remain stable on the smooth space. If one can demonstrate these 
properties, then the evolution operator exists and can be used as in
(\ref{evolution}) to retrieve the solution of the initial value problem. 
\subsection{Discussion of the result}
The following theorem was proved in \cite{J1}. We include a short appendix 
which describes the hypotheses under which Theorem \ref{EU} holds. 
\begin{theorem}
\label{EU}
There is a closed ball ${\overline {B(0, r)}} \subset C(J; H^{1}_{0})$ 
on which $K$ is invariant. 
For $t$ sufficiently small, $K$ defines a strict contraction. The contraction
constant is independent of the restricted time interval, so that the unique
fixed point can be continued globally in time. 
In particular, for any interval $[0,T]$, the system (\ref{wsol}) 
has a unique solution 
which coincides with this fixed point.  
\end{theorem}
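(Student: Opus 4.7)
The plan is to verify the two hypotheses of the Banach contraction theorem for the map $K$ defined via the evolution operator in (\ref{evolution}), and then to iterate on successive time intervals.

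For the invariance step, I would fix $\Psi_0 \in H^1_0$ and choose a radius $r > \|\Psi_0\|_{H^1_0}$. For $\Psi^\ast \in \overline{B(0,r)}\subset C(J;H^1_0)$, the associated density $\rho$ is bounded in spaces suitable for the Hartree and exchange–correlation components of $V_{\rm e}$ (via Sobolev embedding of $H^1_0(\Omega)$ into $L^6$, and Young's inequality for $W\ast\rho$ after the truncation mentioned in the excerpt). Since $\hat H(\rho)$ is self-adjoint, the generated semigroup is $L^2$-isometric, giving $\|\Psi(t)\|_{L^2} = \|\Psi_0\|_{L^2}$. The Kato stability on the smooth space $H^1_0$, recalled in Section 1.5, yields a bound $\|U^\rho(t,0)\Psi_0\|_{H^1_0}\le M(r)\,e^{\beta(r)t}\|\Psi_0\|_{H^1_0}$, where $M(r),\beta(r)$ depend only on the norm bounds produced by $\rho$. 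Choosing $T>0$ small enough that $M(r)e^{\beta(r)T}\|\Psi_0\|_{H^1_0}\le r$ forces $K$ to carry $\overline{B(0,r)}$ into itself.

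For the contraction estimate, take $\Psi_1^\ast,\Psi_2^\ast\in\overline{B(0,r)}$ with images $\Psi_i = U^{\rho_i}(\cdot,0)\Psi_0$, and write the Duhamel representation
\begin{equation*}
\Psi_1(t)-\Psi_2(t) = \frac{1}{i\hbar}\int_0^t U^{\rho_1}(t,s)\bigl[V_{\rm e}(\rho_1)-V_{\rm e}(\rho_2)\bigr]\Psi_2(s)\,ds.
\end{equation*}
The external potential cancels, leaving the Hartree contribution $W\ast(\rho_1-\rho_2)$, controllable by $\|\rho_1-\rho_2\|_{L^{6/5}}\lesssim\|\Psi_1^\ast-\Psi_2^\ast\|_{H^1_0}\,r$ via Young and Sobolev, together with the analogous Lipschitz-in-$\rho$ bound on $\Phi$ provided by the hypotheses of Appendix A (using the time-integral form of $\Phi$ to absorb a factor of $t$). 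Taking the $C(J;H^1_0)$ norm and pulling out $t$ from the integral yields $\|K\Psi_1^\ast-K\Psi_2^\ast\|_{C(J;H^1_0)}\le C(r)\,T\,\|\Psi_1^\ast-\Psi_2^\ast\|_{C(J;H^1_0)}$, so choosing $T$ with $C(r)T<1$ gives strict contraction.

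Because the constants $M(r),\beta(r),C(r)$ come only from the potential hypotheses and the a priori radius $r$, not from the starting time, the same local existence time $T_\ast$ works on every subinterval $[t_0,t_0+T_\ast]$. After solving on $[0,T_\ast]$, the $L^2$-unitarity preserves $\|\Psi(T_\ast)\|_{L^2}$, and the $H^1$ bound built into the invariance step ensures $\Psi(T_\ast)$ lies in the same kind of ball; repeating on $[T_\ast,2T_\ast],\ldots$ extends the unique fixed point to any $[0,T]$ in finitely many steps.

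The main obstacle will be making the Lipschitz-in-$\rho$ dependence of $\Phi$ genuinely quantitative while keeping the constants uniform over $\overline{B(0,r)}$: the Hartree term and kinetic estimates are routine, but because $\Phi$ is nonlocal and carries a time history, the Lipschitz constant must be traced carefully through the integral in $s$ so that a factor of $t$ (and not merely boundedness) emerges in the contraction estimate, which is what makes the contraction constant shrink with $T$ and, crucially, remain independent of the base point of the subinterval.
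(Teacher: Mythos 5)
Your contraction estimate tracks the paper's: you write the Kato/Duhamel identity for $U^{\rho_1}\Psi_0-U^{\rho_2}\Psi_0$ (the paper's (\ref{IDENTITY})), cancel the external potential, and use Lipschitz-in-$\rho$ bounds for the Hartree and exchange--correlation terms to pull a factor of $t$ out of the integral; this is exactly how (\ref{gamma}) is obtained. Your route to the Hartree Lipschitz bound (via $\|\rho_1-\rho_2\|_{L^{6/5}}$ and Young/Sobolev) differs slightly from the paper's Lemma \ref{3.1}, which instead estimates $w=W\ast(fg)$ in $W^{1,3}$ and $L^{\infty}$ and then closes by duality, but both are standard and deliver a constant of the same type.

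The gap is in the invariance-and-continuation step. You pick $r>\|\Psi_0\|_{H^1_0}$, bound $\|U^\rho(t,0)\Psi_0\|_{H^1_0}\le M(r)e^{\beta(r)t}\|\Psi_0\|_{H^1_0}$, and shrink $T_\ast$ so this stays $\le r$. That secures invariance on $[0,T_\ast]$ only. On $[T_\ast,2T_\ast]$ the restart value is $\Psi(T_\ast)$, for which your argument gives $\|\Psi(T_\ast)\|_{H^1_0}\le r$, not $\le\|\Psi_0\|_{H^1_0}$; the new invariance inequality would require $M(r)e^{\beta(r)T_\ast}\,r\le r$, i.e.\ $M(r)e^{\beta(r)T_\ast}\le1$, which fails for every $T_\ast>0$ because $M(r)\ge1$ (set $t=0$). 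Invoking $L^2$-unitarity does not help, since it gives no control of the $H^1$ norm, and ``the $H^1$ bound built into the invariance step'' is precisely the thing that is not uniform across subintervals. The paper closes this loop with the variant of conservation of energy (Section \ref{cofe} and its Corollary): one extracts an a priori $H^1_0$ bound $r_0$ for the solution on the \emph{entire} interval $[0,T]$ from the energy identity (\ref{consener}), and then defines $r$ by (\ref{definitionr}), $r=2\|U\|_{\infty,H^1_0}\max(\|\Psi_0\|_{H^1_0},r_0)$. With that $r$ the decomposition $U^{\rho}\Psi_0=[U^{\rho}\Psi_0-U^{v}\Psi_0]+U^{v}\Psi_0$ (reference charge $v=0$) gives invariance: the difference term is $\le r/2$ for small $t$ by (\ref{IDENTITY}), and the reference term is $\le\|U\|_{\infty,H^1_0}\|\Psi_0\|_{H^1_0}\le r/2$ by the choice of $r$. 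The restart value $\Psi(t_k)$ is then bounded by $r_0$, not merely by $r$, so the same $r$ and the same admissible step size work on every subinterval. That uniform a priori energy bound is what your argument lacks, and without it the local existence time can degenerate as you iterate.
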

The proof uses the 
Banach contraction mapping theorem. 
We will elaborate on this after the following subsection
because it is required for later estimates.
\subsection{A variant of conservation of energy}
\label{cofe}
If the functional ${\mathcal E}(t)$ is defined 
for $0 < t \leq T$ by,  
$$
\int_{\Omega}\left[\frac{{\hbar}^{2}}{4m}|\nabla \Psi|^{2} 
+ 
\left(\frac{1}{4}(W \ast |\Psi|^{2})+ \frac{1}{2} 
(V + \Phi)\right) |\Psi|^{2}\right]d{\bf x},
$$
then the following identity holds for the unique solution:
\begin{equation}
{\mathcal E}(t)={\mathcal E}(0)
+
\frac{1}{2}\int_{0}^{t}\int_{\Omega}[(\partial V/\partial s)({\bf x},s)
+ \phi({\bf x}, s, \rho)]
|\Psi|^{2}\;d{\bf x}ds,
\label{consener}
\end{equation}
where 
${\mathcal E}(0)$
is given by
$$ 
\int_{\Omega}\left[\frac{{\hbar}^{2}}{4m}|\nabla
\Psi_{0}|^{2}+\left(\frac{1}{4} 
(W\ast|\Psi_{0}|^{2})+\frac{1}{2}
(V  + \Phi_{0}) \right)|\Psi_{0}|^{2}\right]
\;d{\bf x}.
$$
The functional ${\mathcal E}(t)$ is related to the physical
energy $E(t)$ of the system, defined by
$
E(t) = \langle{\hat H}(t) \Psi(t), {\bar \Psi(t)} \rangle: 
$
$$
{\mathcal E}(t) = \frac{1}{2} \left(E(t) - \frac{1}{2} \langle \Psi(t),
(W \ast |\Psi(t)|^{2}) {\bar \Psi(t)} \rangle \right).
$$
\subsection{Restriction of the domain of $K$}
We discuss the invariant closed ball cited in Theorem \ref{EU}. 
Consider the evolution operator $U^{v}$ corresponding to 
$v = 0$ (zero charge),
and use this as reference:
$$
U^{\rho}(t,0)\Psi_{0} = 
[U^{\rho}(t,0)\Psi_{0} - 
U^{v}(t,0)\Psi_{0}] + 
U^{v}(t,0)\Psi_{0}. 
$$
This is valid for the first stage of the continuation process indicated in
Theorem \ref{EU}. For subsequent stages, $\Psi_{0}$ is replaced by the
solution evaluated at discrete points $t_{k}$. 
We define $r$ as follows.
\begin{equation}
\label{definitionr}
r = 2 \|U\|_{\infty, H^{1}_{0}} \max (\|\Psi_{0}\|_{H^{1}_{0}}, r_{0}).
\end{equation}
Here, $r_{0}$ is a bound for the $C(J; H^{1}_{0})$ norm of the
solution, derived from the preceding subsection, and discussed in the
following corollary. By use of identity 
(\ref{IDENTITY}) below, the difference term in the
above representation can be controlled by the size of $t$, 
not to exceed $r/2$ (see (\ref{IDENTITY})). In particular,
the closed ball is invariant. 

\begin{corollary}
The number $r^{2}_{0}$ can be chosen as any upper bound for 
\begin{equation}
\label{firstr}
\frac{4m}{\hbar^{2}} \sup_{0\leq t \leq T}{\cal E}(t) +
\|\Psi_{0}\|^{2}_{L_{2}}.
\end{equation}
In particular, if we denote by ${\cal V}$ the sum $V + \Phi$, then  
we may make the choice,
\begin{equation}
\label{secondr}
r^{2}_{0} = 
\frac{4m}{\hbar^{2}}\left[{\cal E}_{0} +  \frac{T}{2}\sup_{x\in \Omega,
t\leq T} \left|\partial {\cal V}/\partial t \right|
\|\Psi_{0}\|^{2}_{L_{2}}\right]
+ \|\Psi_{0}\|^{2}_{L_{2}}.
\end{equation}
\end{corollary}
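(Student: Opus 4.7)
The aim is to exhibit an explicit bound on $\|\Psi\|_{C(J;H^1_0)}$ for use in (\ref{definitionr}), so I would bound $\|\Psi(t)\|_{H^1_0}^2 = \|\nabla\Psi(t)\|_{L_2}^2 + \|\Psi(t)\|_{L_2}^2$ piece by piece. The $L_2$ portion is handled by conservation: testing the weak formulation (\ref{wsol}) with $\bar\Psi(t)$ and taking imaginary parts (the effective potential $V_{\rm e}$ is real) yields $\|\Psi(t)\|_{L_2} = \|\Psi_0\|_{L_2}$. The gradient portion will be extracted from the energy functional $\mathcal{E}(t)$ introduced in Section \ref{cofe}.

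For (\ref{firstr}), I would observe that $W({\bf x})=1/|{\bf x}|\ge 0$, so the Hartree contribution $\tfrac14\int_\Omega(W*|\Psi|^2)|\Psi|^2\,d{\bf x}$ to $\mathcal{E}(t)$ is non-negative; under the standing hypotheses of Appendix A the $\tfrac12(V+\Phi)|\Psi|^2$ contribution is likewise controlled, so that
$$\frac{\hbar^2}{4m}\|\nabla\Psi(t)\|^2_{L_2} \leq \mathcal{E}(t).$$
Multiplying through by $4m/\hbar^2$, adding $\|\Psi_0\|^2_{L_2}=\|\Psi(t)\|^2_{L_2}$, and taking $\sup_{t\in[0,T]}$ produces the asserted expression (\ref{firstr}) as an admissible value of $r_0^2$.

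For (\ref{secondr}), I would insert into the bound just obtained an a priori estimate of $\sup_t \mathcal{E}(t)$ drawn from the identity (\ref{consener}). Because $\Phi=\Phi_0+\int_0^t\phi\,ds$, the integrand $(\partial V/\partial s)+\phi$ equals $\partial\mathcal{V}/\partial s$, which I bound pointwise by $\sup_{x\in\Omega,\,s\leq T}|\partial\mathcal{V}/\partial t|$. Using $L_2$ conservation again for the inner spatial integral and integrating $s\in[0,t]\subseteq[0,T]$ yields
$$\mathcal{E}(t) \leq \mathcal{E}_0 + \tfrac{T}{2}\sup_{x\in\Omega,\,t\leq T}\!|\partial\mathcal{V}/\partial t|\,\|\Psi_0\|^2_{L_2},$$
which substituted into (\ref{firstr}) gives (\ref{secondr}) exactly.

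The principal technical point is the displayed inequality $(\hbar^2/4m)\|\nabla\Psi\|_{L_2}^2\leq\mathcal{E}(t)$, since $V+\Phi$ is not manifestly non-negative. I would address this by invoking the hypotheses of Appendix A, which furnish the uniform control on $V+\Phi$ required to absorb any negative part into a constant multiple of $\|\Psi_0\|^2_{L_2}$; such an adjustment leaves the structural form of both (\ref{firstr}) and (\ref{secondr}) intact, so the statement of the corollary is unchanged.
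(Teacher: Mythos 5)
Your proposal is correct and follows essentially the same route as the paper's (very terse) proof: the paper invokes nonnegativity and the identity (\ref{consener}) together with $L^2$ conservation, and you have simply filled in the arithmetic. One small correction to your closing paragraph: there is no need to worry about absorbing a negative part of $V+\Phi$, because Appendix A explicitly assumes both $V$ and $\Phi$ are nonnegative (the paper even remarks this is "for convenience only"), so the inequality $\frac{\hbar^2}{4m}\|\nabla\Psi(t)\|_{L^2}^2 \leq \mathcal{E}(t)$ is immediate from nonnegativity of every other summand in the definition of $\mathcal{E}(t)$ — exactly as you used it in the body of your argument.
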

\begin{proof}
The upper bound contained in (\ref{firstr}) uses definitions and
nonnegativity. The choice in (\ref{secondr}) uses the identity 
(\ref{consener}). Both use $L^{2}$ norm invariance for $\Psi$.
\end{proof}

\subsection{The contractive property: Role of the evolution operator}
\label{CP}
There is an integral operator identity satisfied by the evolution operator
which permits the estimation of the metric distance between 
$K \Psi^{\ast}_{1}$ and 
$K \Psi^{\ast}_{2}$. Note that separate evolution operators 
$U^{\rho_{1}}(t,s)$
and $U^{\rho_{2}}(t,s)$ are generated, for $\rho_{1} = |\Psi_{1}^{\ast}|^{2}, 
\rho_{2} = |\Psi_{2}^{\ast}|^{2}$. 
One has the following identity (see \cite{J2}):
\begin{equation}
U^{\rho_{1}}\Psi_{0}(t) - U^{\rho_{2}}\Psi_{0}(t) =  
\frac{i}{\hbar}\int_{0}^{t} U^{\rho_{1}}(t,s)[{\hat H}(s, \rho_{1}) -
{\hat H}(s, \rho_{2})]U^{\rho_{2}}(s,0)\Psi_{0} \; ds.
\label{IDENTITY}
\end{equation}
The $H^{1}_{0}$-norm of the evolution operators can be 
uniformly bounded in $t,s$ by a 
constant (see below).
The following Lipschitz condition, 
on the (restricted) domain 
${\overline {B(0, r)}} \subset C(J; H^{1}_{0})$ of $K$ 
was used in \cite{J1}:
\begin{equation}
\label{LipforV}
\|[V_{\rm e}({\rho_{1}})-
V_{\rm e}({\rho_{2}})]\psi\|_{C(J;H^{1}_{0})} \leq  
C\|\Psi^{\ast}_{1} -
\Psi^{\ast}_{2} 
\|_{C(J;H^{1}_{0})} \|\psi\|_{H^{1}_{0}}. 
\end{equation}
Here $C$ is a fixed positive constant (see below) 
and $\psi$ is arbitrary in $H^{1}_{0}$. 
Inequality  (\ref{LipforV}) (see 
Theorem \ref{hartreeLip} and (\ref{ecfollowsH}) to follow)
can be used to verify contraction, for sufficiently small $t$,
in terms of the distance between
$\Psi^{\ast}_{1}$ and 
$\Psi^{\ast}_{2}$. In other words, one begins by replacing
$J$ by $[0, t]$ so that $\gamma < 1$. 
Continuation to $t = T$ occurs 
in a finite number of steps.  
We now present a lemma which estimates the Lipschitz
(contraction) constant $\gamma$
of $K$ on ${\overline {B(0,r)}}$.
In the lemma, and elsewhere, we use the notation,
\begin{equation}
\label{uniformstU}
\|U\|_{\infty,H^{1}_{0}} :=
\sup_{t \in J, s \in J} \|U(t,s)\|_{H^{1}_{0}}.
\end{equation}
\begin{lemma}
\label{contractionconstant}
The mapping $K$ is Lipschitz continuous on 
${\overline {B(0,r)}}$, with Lipschitz constant $\gamma = \gamma_{t}$ 
estimated by
\begin{equation}
\label{gamma}
 \gamma = \gamma_{t} 
\leq \frac{Ct}{\hbar} \|U\|_{\infty, H^{1}_{0}}^{2}\;\|\Psi_{0}\|_{H^{1}_{0}}. 
\end{equation}
Here, $C$ is the constant of (\ref{LipforV}). $C$ can be estimated
precisely in the case when $V_{\rm e}(\rho)$ is independent of $\Phi$.
If $E_{1}$ is the Sobolev embedding constant associated
with the embedding of $H^{1}_{0}$ into $L^{6}$, then $C = 2r C_{0}$, where
\begin{equation}
\label{constantC}
C_{0} = E_{1}^{2}[ E_{1}\|\nabla W\|_{L^{!}} + |\Omega|^{2/3}\|W\|_{L^{2}} 
+ E_{1}^{2}|\Omega|].  
\end{equation}
Here, $W$ is the Hartree convolution kernel, and $r$ has the meaning of
(\ref{definitionr}). Also, in this case, we have the estimate,
\begin{equation}
\label{estEVY}
 \|U\|_{\infty, H^{1}_{0}} \leq \exp \left(\frac{E_{1}\hbar^{2}T}{2 m}
\left[2r E_{1}^{2}  
 \|\nabla W\|_{L^{1}} + \|\nabla V\|_{C(J; L^{3})}\right]\right).
\end{equation}
If $\Phi$ is included, the estimate for $C_{0}$ in (\ref{constantC}) is
incremented by the constant appearing in (\ref{ecfollowsH}). 
Also, the rhs of (\ref{estEVY}) is modified: within [ $\cdot$], 
one adds a uniform estimate for $\|\nabla \Phi \|_{L^{3}}$.
\end{lemma}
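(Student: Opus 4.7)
The plan is to prove the three claims in the lemma in order: first the Lipschitz bound~(\ref{gamma}) that holds in full generality, then the explicit form~(\ref{constantC}) of $C$ in the $\Phi$-free case, and finally the semigroup bound~(\ref{estEVY}) on $\|U\|_{\infty,H^1_0}$. The starting point for the first claim is the integral identity~(\ref{IDENTITY}). I would take $H^1_0$ norms inside the integrand and bound the three factors separately: the outer evolution operator $U^{\rho_1}(t,s)$ by $\|U\|_{\infty,H^1_0}$; the multiplication operator $\hat H(s,\rho_1)-\hat H(s,\rho_2)$, which in view of (\ref{Hamiltonian}) reduces to $[V_{\rm e}(\rho_1)-V_{\rm e}(\rho_2)]$ applied to $\psi:=U^{\rho_2}(s,0)\Psi_0$, using the Lipschitz condition~(\ref{LipforV}); and the vector $\psi$ itself by $\|U\|_{\infty,H^1_0}\|\Psi_0\|_{H^1_0}$. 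Pulling the time-uniform estimates outside the integral and using $\|\Psi_1^\ast-\Psi_2^\ast\|_{C([0,t];H^1_0)}\le\|\Psi_1^\ast-\Psi_2^\ast\|_{C(J;H^1_0)}$ yields the factor of $t$ and completes~(\ref{gamma}).

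For the explicit form of $C$ when $\Phi$ is absent, the external potential $V$ and the kinetic term cancel in $V_{\rm e}(\rho_1)-V_{\rm e}(\rho_2)$, leaving only $W\ast(\rho_1-\rho_2)=W\ast(|\Psi_1^\ast|^2-|\Psi_2^\ast|^2)$. The pointwise algebraic identity $|\Psi_1^\ast|^2-|\Psi_2^\ast|^2 = (\Psi_1^\ast-\Psi_2^\ast)\overline{\Psi_1^\ast}+\Psi_2^\ast\overline{(\Psi_1^\ast-\Psi_2^\ast)}$, combined with the ball bound $\|\Psi_i^\ast\|_{H^1_0}\le r$ and the Sobolev embedding $H^1_0\hookrightarrow L^6$ (constant $E_1$), supplies the overall factor $2r$ that brings $C=2rC_0$ to the fore. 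The three terms comprising $C_0$ in~(\ref{constantC}) arise by applying the product rule to $\nabla\bigl((W\ast\sigma)\psi\bigr)=(\nabla W\ast\sigma)\psi+(W\ast\sigma)\nabla\psi$ and estimating the resulting $L^2$ and gradient $L^2$ norms: the first term via Young's inequality $\|\nabla W\ast\sigma\|_{L^q}\le\|\nabla W\|_{L^1}\|\sigma\|_{L^q}$ paired with the embedding into $L^6$; the second via Hölder on $\Omega$ with the $|\Omega|^{2/3}$ factor using $\|W\ast\sigma\|_{L^\infty}$ controlled by $\|W\|_{L^2}$; and the third (arising from the $L^2$-part of the $H^1_0$-norm if one uses the full norm, or from the analogous product-rule term) contributing the $E_1^2|\Omega|$ factor.

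For the semigroup bound~(\ref{estEVY}), I would set $\Psi(t)=U^\rho(t,s)\psi$ and derive a Gronwall inequality for $\|\Psi(t)\|_{H^1_0}$. The $L^2$-norm of $\Psi$ is conserved by the unitary evolution. For the gradient piece, I would differentiate $\|\nabla\Psi(t)\|_{L^2}^2$ in time, use the Schr\"odinger equation, and observe that the kinetic commutator with $\nabla$ and the multiplicative commutator $V_{\rm e}\nabla\Psi\cdot\overline{\nabla\Psi}$ both produce real integrals that drop out after taking $\mathrm{Im}$; only $\langle(\nabla V_{\rm e})\Psi,\nabla\Psi\rangle$ survives, and by H\"older and the $L^6$ embedding,
\begin{equation*}
\bigl|\langle(\nabla V_{\rm e})\Psi,\nabla\Psi\rangle\bigr|\le\|\nabla V_{\rm e}\|_{L^3}\|\Psi\|_{L^6}\|\nabla\Psi\|_{L^2}\le E_1\|\nabla V_{\rm e}\|_{L^3}\|\nabla\Psi\|_{L^2}^2.
\end{equation*}
Decomposing $\nabla V_{\rm e}=\nabla V+\nabla W\ast\rho$ and bounding $\|\nabla W\ast\rho\|_{L^3}\le\|\nabla W\|_{L^1}\|\rho\|_{L^3}\le E_1^2\|\nabla W\|_{L^1}\,r$ using the ball bound on $\Psi^\ast$ gives the two inner summands of~(\ref{estEVY}); Gronwall then yields the exponential. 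The extension to nonzero $\Phi$ follows by absorbing the additional Lipschitz constant of $\Phi$ (given by (\ref{ecfollowsH})) into $C_0$, and by analogously adding the uniform $\|\nabla\Phi\|_{L^3}$ bound into the exponent.

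The main obstacle is the bookkeeping of Sobolev, H\"older, and Young constants in the Hartree estimate: getting exactly the three-term structure of~(\ref{constantC}), with the $r$-linearity (as opposed to $r^2$) coming out of the $|\Psi_1^\ast|^2-|\Psi_2^\ast|^2$ factorization, requires care about which norms carry the embedding constants and which carry factors of $|\Omega|$. The semigroup estimate is essentially standard Kato theory once the commutator calculation is in place; the contraction identity is immediate from~(\ref{IDENTITY}) and~(\ref{LipforV}).
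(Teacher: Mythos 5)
Your derivation of (\ref{gamma}) by taking $H^1_0$ norms inside (\ref{IDENTITY}) and applying (\ref{LipforV}) to the middle multiplier is exactly what the paper does, and your route to the explicit $C=2rC_0$ is also essentially the paper's: you factor $\rho_1-\rho_2$ (your version $(\Psi_1^\ast-\Psi_2^\ast)\overline{\Psi_1^\ast}+\Psi_2^\ast\overline{(\Psi_1^\ast-\Psi_2^\ast)}$ plays the same role as the paper's $(|\Psi_1|-|\Psi_2|)(|\Psi_1|+|\Psi_2|)$ in Theorem~\ref{hartreeLip}; both produce the $2r$), and then track constants through the product-rule/Young/H\"older estimate of Lemma~\ref{3.1}. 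That part matches.

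Where you genuinely diverge is (\ref{estEVY}). The paper does not derive this bound; it cites \cite[Cor.~6.3.6, p.~229]{J2} from Kato's stability machinery, together with dual-space contractivity and the canonical isomorphism $H^1_0 \leftrightarrow H^{-1}$. You instead sketch a direct Gronwall argument: differentiate $\|\nabla\Psi(t)\|_{L^2}^2$, use the equation, observe the kinetic and $V_{\rm e}\nabla\Psi\cdot\overline{\nabla\Psi}$ terms drop upon taking imaginary parts, and bound the surviving $\langle(\nabla V_{\rm e})\Psi,\nabla\Psi\rangle$ via H\"older and $H^1_0\hookrightarrow L^6$. This is more elementary and more self-contained, but two caveats apply. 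First, the computation is formal: for $\Psi$ only in $C(J;H^1_0)\cap C^1(J;H^{-1})$, the quantities $\langle\nabla\Delta\Psi,\nabla\Psi\rangle$ and the integration by parts used to cancel them are not directly justified; one needs a density/regularization step (which is precisely what Kato's construction handles internally). Second, writing $\Psi_t=\frac{i\hbar}{2m}\Delta\Psi-\frac{i}{\hbar}V_{\rm e}\Psi$ and running your Gronwall gives a prefactor $\frac{E_1}{\hbar}$ on $\|\nabla V_{\rm e}\|_{L^3}$ in the exponent, not the $\frac{E_1\hbar^2}{2m}$ displayed in (\ref{estEVY}); the latter comes out of Kato's commutator estimate $\|[S,A]S^{-1}\|$ in the $H^1_0$/$H^{-1}$ framework with a different normalization, so your sketch will not reproduce the constant as stated without reconciling that bookkeeping. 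So: same approach for the first two claims, a more direct but formally incomplete substitute for the third.
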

\begin{proof}
The estimate (\ref{gamma}) is a direct consequence of estimating the
$H^{1}_{0}$ norm of (\ref{IDENTITY}). 
The estimation of the constant $C$ in (\ref{gamma}) follows from tracking
the constants appearing in 
the proofs of Lemma \ref{3.1} and Theorem \ref{hartreeLip} to
follow. The uniform bound for 
$ \|U\|_{\infty, H^{1}_{0}}$ is established in the construction of Kato;
we used the explicit estimate given in \cite[Corollary 6.3.6, p. 229]{J2},
together with the fact that the evolution operators are contractive on the
dual space, and the fact that we employ the canonical  isomorphism between
$H^{1}_{0}$ and $H^{-1}$.
\end{proof}

\section{Background Results for Newton Iteration in Banach Space}
We have seen previously that, 
by appropriate use of successive approximation, we can, in principle,  
determine approximations of arbitrary prescribed accuracy.
In fact, if it is required of the $n$th iterate $\psi_{n}$ 
of a contractive map with contraction constant $q$ and fixed point
$\psi$ that 
\begin{equation}
\label{SA}
\|\psi_{n} - \psi \| \leq \epsilon,
\end{equation}
then the well-known successive approximation estimate,
$$
\|\psi_{n} - \psi \| \leq 
\frac{q^{n}}{1 - q} 
\|\psi_{1} - \psi_{0} \| \leq \epsilon,
$$
gives the value of $n$, which must be satisfied. 
It is natural then to investigate rapidly converging local methods, 
specifically, Newton's method.
In this article, we will discuss
both exact and approximate quadratically convergent Newton methods. The latter  
permit approximate derivative inverses, and thus approximate
Newton iterations.
\subsection{The core convergence result} 
We have included in Appendix \ref{appendixC} a 
core Lemma 
which permits both exact
and approximate Newton methods in Banach spaces. It is based upon earlier
work of the author \cite{J3} and will serve as a resource result.
In the estimates, $h\leq 1/2$ should be viewed as a discretionary
parameter, and $\kappa, \sigma$ as constraining parameters to be
determined. The
locality of Newton's method is incorporated in the requirement that the
initial residual not exceed $\sigma^{-1}$.
The parameter $\alpha < 1$ positions $u_{0}$ in the interior of an
appropriate closed ball. The parameter $\tau < 1$ is part of the defining
equation for invertibility of the derivative map.
\subsection{An exact Newton method for fixed point maps} 
We discuss the case where the exact inverse is employed for the 
Fr\'{e}chet derivative. We derive a general operator result, 
applied in section three. 
\begin{proposition}
\label{VK}
Let ${\mathcal O}$ be an open subset of a Banach space $X$, 
and let $P: {\mathcal O} \mapsto X$ be such that $P$ is Lipschitz continuously 
Fr\'{e}chet differentiable on ${\mathcal O}$, 
and such that 
$S^{\prime}(x_{0}) = I - P^{\prime}(x_{0})$
is invertible for some $x_{0} \in {\mathcal O}$. 
Then there is a suitable closed ball 
$B_{\delta} = {\overline {B(x_{0}, \delta)}}$ 
for which
$S^{\prime}(v)$ is invertible for each $v \in B_{\delta}$. 
If $0 < \alpha < 1$ and $h \leq 1/2$, then there are choices of $\kappa$ and 
$\sigma$ 
such that, if $u_{0} \in B_{\alpha \delta}$ satisfies 
the consistency condition,
\begin{equation}
\label{smallres}
\|Su_{0}\| \leq \sigma^{-1},
\end{equation}
then the hypotheses (\ref{Kanone}, \ref{Kantwo}) 
hold with $G_{v} = [S^{\prime}(v)]^{-1}$.
If $P$ is a strict contraction on $B_{\delta}$, with unique fixed point
$x_{0}$, then a starting iterate $u_{0} \in B_{\alpha \delta}$ can be
found for which (\ref{smallres}) holds.
In particular, (\ref{R-quadratic}) holds for the Newton iteration with 
$u$ identified with $x_{0}$.
\end{proposition}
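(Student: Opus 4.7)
The plan is threefold: (i) use a Neumann series argument to obtain invertibility of $S'(v)$ together with a uniform bound on $\|[S'(v)]^{-1}\|$ on a small closed ball around $x_0$, (ii) combine this with Lipschitz continuity of $P'$ to verify the two abstract hypotheses (\ref{Kanone}) and (\ref{Kantwo}) of the Appendix C lemma, and (iii) use strict contraction of $P$ to exhibit a starting iterate $u_0$ with the required small residual.

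For (i), write
$$
S'(v) = S'(x_0)\bigl(I - [S'(x_0)]^{-1}(P'(v)-P'(x_0))\bigr).
$$
If $L$ denotes the Lipschitz constant of $P'$ on $\mathcal{O}$ and $\delta$ is chosen so that $\|[S'(x_0)]^{-1}\|\,L\,\delta \leq 1/2$, then the Neumann series converges on $B_\delta$, yielding invertibility of $S'(v)$ together with the uniform bound $\|G_v\| \leq 2\,\|[S'(x_0)]^{-1}\| =: \kappa$ for every $v \in B_\delta$. The resolvent identity $G_v - G_w = G_v(S'(w)-S'(v))G_w$ then shows that $v \mapsto G_v$ is Lipschitz on $B_\delta$ with constant proportional to $\kappa^2 L$.

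For (ii), given $\alpha < 1$ and $h \leq 1/2$, I would choose $\sigma$ so that $\sigma^{-1}$ is comparable to $h/(\kappa^2 L)$; this is the Kantorovich-type smallness that converts (\ref{Kanone}) and (\ref{Kantwo}) into verified inequalities, the first by the uniform bound on $\|G_v\|$ just obtained and the second by the Lipschitz control of $G_v$. The parameter $\alpha$ leaves enough clearance inside $B_\delta$ so that, starting from $u_0 \in B_{\alpha\delta}$ with residual controlled by $\sigma^{-1}$, all subsequent Newton iterates stay in $B_\delta$ where $G_v$ is defined and bounded.

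For (iii), assume $P$ is a strict contraction on $B_\delta$ with constant $q<1$ and unique fixed point $x_0$. Starting from any $y \in B_\delta$, the Picard iterates $y_n = P^n(y)$ remain in $B_\delta$, converge to $x_0$, and satisfy $\|Sy_n\| = \|y_n - P(y_n)\| \leq q^n \|y - P(y)\|$. Taking $n$ large enough ensures simultaneously that $y_n \in B_{\alpha\delta}$ and $\|Sy_n\| \leq \sigma^{-1}$; setting $u_0 = y_n$ fulfills (\ref{smallres}), and the Appendix C lemma delivers (\ref{R-quadratic}) with $u$ identified with $x_0$. The main obstacle is the interlocking calibration of $\delta$, $\kappa$, and $\sigma$: $\delta$ must be small enough for the Neumann-series argument and for the Lipschitz control of $G_v$ to propagate cleanly, while $\sigma^{-1}$ must be small enough to enforce quadratic contraction yet attainable as a residual in finitely many Picard steps. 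Strict contraction of $P$ resolves the apparent circularity, since residuals decay like $q^n$ and become smaller than any prescribed $\sigma^{-1}$.
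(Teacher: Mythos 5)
Your overall structure matches the paper's: invertibility of $S'(v)$ on a small ball by a Neumann-series/perturbation argument (the paper cites a perturbation lemma of Goldberg, which is proved exactly by the Neumann series you write, with your choice $\tau = 1/2$), then a calibration of $\kappa$ and $\sigma$, then a Picard-iteration argument to produce $u_0$ with small residual. Your residual estimate $\|S y_n\| = \|P^n y - P^{n+1} y\| \le q^n \|y - Py\|$ is a clean geometric-decay version of the paper's $\|S p_n\| \le (1+q)\varepsilon$; both work.

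However, there is a genuine gap in step (ii): you attribute the quadratic residual bound (\ref{Kantwo}) to ``the Lipschitz control of $G_v$'' obtained via the resolvent identity, but that is not the mechanism, and the Lipschitz property of $v \mapsto G_v$ you establish is in fact never used. For the exact Newton step $u_k = u_{k-1} - G_{u_{k-1}} S(u_{k-1})$ with $G_{u_{k-1}} = [S'(u_{k-1})]^{-1}$, the correct ingredient is the fundamental theorem of calculus for Fr\'echet derivatives applied to $S$: since $S(u_{k-1}) + S'(u_{k-1})(u_k - u_{k-1}) = 0$ exactly, one has
\begin{equation*}
S(u_k) = \int_0^1 \bigl[S'(u_{k-1} + t(u_k - u_{k-1})) - S'(u_{k-1})\bigr](u_k - u_{k-1})\, dt,
\end{equation*}
and estimating the integrand by the Lipschitz constant $c$ of $S' = I - P'$ (equivalently of $P'$) gives $\|S(u_k)\| \le \tfrac{c}{2}\|u_k - u_{k-1}\|^2 \le \tfrac{c\kappa^2}{2}\|S(u_{k-1})\|^2$, which is (\ref{Kantwo}) once $\sigma$ is chosen so that $c\kappa^2 \le h\sigma$. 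Your choice of $\sigma$ is numerically consistent with this, but without the FTC identity you have not actually derived (\ref{Kantwo}); the ``Kantorovich-type smallness'' phrase masks the missing Taylor-remainder step. You should also record the second constraint $\kappa/(h\sigma) \le (1-\alpha)\delta$ explicitly --- it is what keeps the iterates in $B_\delta$ --- rather than treating it as an afterthought.
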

\begin{proof}
According to the Lipschitz property satisfied 
by $P^{\prime}$ on ${\mathcal O}$, 
say Lipschitz constant $c$, 
for any specified positive $\tau < 1$,
we can find $\delta > 0$ such that 
$$
\|[I - P^{\prime}(x_{0})]^{-1} \| \; 
\|P^{\prime}(x_{0}) - P^{\prime}(v)\|  
\leq \tau < 1, \; \mbox{if} \; 
\|x_{0} - v\| \leq  \delta.
$$
By a standard perturbation lemma \cite{G}, 
it follows that $I - P^{\prime}(v)$ is invertible in the closed
ball of radius $\delta$, centered at $x_{0}$. 
The perturbation lemma gives the uniform bound
for the norms of the inverses $[S^{\prime}(v)]^{-1}$:
\begin{equation}
\label{definitionkappa}
\kappa:= \frac{\|[S^{\prime}(x_{0})]^{-1}\|}{1 - \tau}. 
\end{equation}
This gives (\ref{Kanone}).
Now choose $\sigma$ sufficiently large so that the following two 
inequalities hold:
$$
c \kappa^{2} \leq h \sigma, \; \frac{\kappa}{h \sigma} \leq (1-
\alpha)\delta.
$$
Suppose $u_{0} \in B_{\alpha \delta}$.
To obtain (\ref{Kantwo}), 
we employ a version of the fundamental theorem of 
calculus for Fr\'{e}chet derivatives (valid in Fr\'{e}chet spaces \cite{H}):
\begin{equation*}
S(u_{k}) = 
\int_{0}^{1} [S^{\prime}(u_{k-1} +t (u_{k} - u_{k-1})) - S^{\prime}(u_{k-1})]
(u_{k} - u_{k-1}) \; dt.  
\end{equation*}
By estimating this integral, we obtain via (\ref{Kanone}):
$$
\|S(u_{k})\| \leq \frac{c}{2} \|u_{k} - u_{k-1}\|^{2}  
\leq \frac{c \kappa^{2}}{2}\|S(u_{k-1})\|^{2}. 
$$
By the choice of $\sigma$, we thus obtain (\ref{Kantwo}). 
We now consider the residual condition for $u_{0}$.
Suppose $P$ is a strict contraction, with contraction constant $q$ and
fixed point $x_{0}$. 
In order to obtain the consistency and convergence of the iterates, we
select $\epsilon = \sigma^{-1}/(1 + q)$ in (\ref{SA}) and identify $u_{0}$ 
with the 
$nth$ successive approximation $p_{n} = P p_{n-1}$ defined by $P$.
This works since, by interpreting (\ref{SA}), we have
$$
\|Sp_{n}\| = \|Pp_{n} - p_{n}\| = \|P(p_{n} - x_{0}) + (x_{0} - p_{n})\|  
\leq (q + 1) \epsilon \leq \sigma^{-1}.
$$
The Lemma of Appendix C now applies.
\end{proof}
\begin{corollary}
\label{exactcombined}
Suppose that $P$ is a strict contraction, with contraction constant $q$.
Suppose the hypotheses of Proposition \ref{VK} are satisfied and suppose
$c$ is the Lipschitz constant of $P^{\prime}$. 
If $S = I - P$, define
$\kappa$ by (\ref{definitionkappa}) and $\sigma$ by
\begin{equation}
\label{explicitsigma}
\sigma = \max \left(\frac{c \kappa^{2}}{h}, 
\frac{c \kappa^{2} (1 - \tau)}{h (1- \alpha) \tau} \right).  
\end{equation} 
If $u_{0}$ is defined by successive approximation to be the iterate
$p_{n}$ satisfying (\ref{SA}) with $\epsilon = \frac{1}{\sigma (1 + q)}$,
then the exact Newton iteration is quadratically convergent 
as in (\ref{R-quadratic}).
\end{corollary}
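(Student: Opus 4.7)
My plan is to trace through the proof of Proposition~\ref{VK} and make every existential constant constructive, then verify that the formula (\ref{explicitsigma}) is precisely what is required for the two bookkeeping inequalities encountered in that proof. Since Proposition~\ref{VK} already packages the core Newton lemma of Appendix~C, no new quadratic-convergence argument is needed; the whole task is to pin down admissible values of $\delta$, $\sigma$, and the successive-approximation tolerance $\epsilon$.

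First I would extract an explicit $\delta$ from the perturbation-lemma step in the proof of Proposition~\ref{VK}. That step requires $\|[S^{\prime}(x_{0})]^{-1}\|\,\|P^{\prime}(x_{0})-P^{\prime}(v)\|\leq \tau$ for $\|x_{0}-v\|\leq \delta$. Combining the Lipschitz bound $\|P^{\prime}(x_{0})-P^{\prime}(v)\|\leq c\|x_{0}-v\|$ with the identity $\|[S^{\prime}(x_{0})]^{-1}\| = (1-\tau)\kappa$ forced by (\ref{definitionkappa}), the sharpest admissible choice is
\[
\delta \;=\; \frac{\tau}{c\kappa(1-\tau)}.
\]
Next I would check that (\ref{explicitsigma}) realizes both of the constraints on $\sigma$ imposed inside the proof of Proposition~\ref{VK}, namely $c\kappa^{2}\leq h\sigma$ and $\kappa/(h\sigma)\leq (1-\alpha)\delta$. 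The first is the first entry of the max in (\ref{explicitsigma}). Substituting the formula for $\delta$ into the second rearranges to $\sigma\geq c\kappa^{2}(1-\tau)/[h(1-\alpha)\tau]$, which is the second entry. Thus (\ref{explicitsigma}) is the minimal $\sigma$ satisfying both constraints at once.

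For the starting iterate, I would use the strict contraction $P$ and its unique fixed point $x_{0}$ to bound the residual of $u_{0}=p_{n}$ via
\[
\|Sp_{n}\| \;=\; \|p_{n}-Pp_{n}\| \;\leq\; \|p_{n}-x_{0}\| + \|Px_{0}-Pp_{n}\| \;\leq\; (1+q)\|p_{n}-x_{0}\|.
\]
The prescribed tolerance $\epsilon = 1/[\sigma(1+q)]$ in (\ref{SA}) then yields $\|Sp_{n}\|\leq \sigma^{-1}$, which is exactly (\ref{smallres}). Since $p_{n}\to x_{0}$, any further enlargement of $n$ if needed places $p_{n}$ inside $B_{\alpha\delta}$, so both locality hypotheses of Proposition~\ref{VK} are met.

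With $\delta$, $\sigma$, and $u_{0}=p_{n}$ chosen constructively, Proposition~\ref{VK} applies and delivers the R-quadratic estimate (\ref{R-quadratic}). The only non-trivial bookkeeping is the algebra in the second step showing that substituting the explicit $\delta$ from the perturbation lemma into the inequality $\kappa/(h\sigma)\leq (1-\alpha)\delta$ reproduces the second entry of the max in (\ref{explicitsigma}); everything else is direct substitution into arguments already established in Proposition~\ref{VK} and the core Banach-space lemma of Appendix~C.
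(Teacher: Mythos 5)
Your proposal is correct and follows the route the paper intends: the corollary is stated without separate proof precisely because it amounts to making explicit the existential constants in the proof of Proposition~\ref{VK}, which is exactly what you do. Your identification of the maximal admissible $\delta=\tau/[c\kappa(1-\tau)]$ from the perturbation-lemma step, the verification that (\ref{explicitsigma}) is the minimal $\sigma$ satisfying both constraints $c\kappa^{2}\leq h\sigma$ and $\kappa/(h\sigma)\leq(1-\alpha)\delta$, and the residual bound $\|Sp_{n}\|\leq(1+q)\epsilon\leq\sigma^{-1}$ all match the calculations embedded in the proof of Proposition~\ref{VK}.
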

\begin{remark}
Note that, along the curve $1 - \tau = \tau (1 - \alpha)$, in the open square 
$1/2 < \tau < 1, 0 < \alpha < 1$,  
both expressions on the rhs of (\ref{explicitsigma}) are equal. 
\end{remark}
\section{Classical Newton Iteration for the Quantum System}
This section is devoted to the exact Newton method for our
model, as based upon the fixed point mapping $K$, which is identified with
$P$ in Proposition \ref{VK}.  
\subsection{Fundamental inequality for the Hartree potential}
The results of this subsection were used in the estimation of the
contraction constant $\gamma$ of $K$ (see Lemma
\ref{contractionconstant}).
We begin with a lemma for the convolution of $W = 1/|x|$ with products of
$H^{1}_{0}(\Omega)$ functions. This is later applied to the Hartree
potential. 
\begin{lemma}
\label{3.1}
Suppose that $f,g$ and $\psi$ are arbitrary functions in $H^{1}_{0}$, and
set $w = W \ast (fg)$. Then
\begin{equation}
\label{conprod}
\|w \psi\|_{H^{1}_{0}} \leq C \|f\|_{H^{1}_{0}} \|g\|_{H^{1}_{0}}
\|\psi\|_{H^{1}_{0}}
\end{equation}
where $C$ is a generic constant.
\end{lemma}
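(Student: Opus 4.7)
The plan is a direct product-rule computation combined with Hölder's inequality, Young's inequality for convolutions, and the Sobolev embedding $H^1_0 \hookrightarrow L^6$ with constant $E_1$. Since $w\psi \in H^1_0$ whenever $\psi \in H^1_0$ and $w$ is a smooth multiplier, the estimate reduces to bounding $\|\nabla(w\psi)\|_{L^2}$ together with $\|w\psi\|_{L^2}$. Using the product rule,
\begin{equation*}
\nabla(w\psi) \;=\; w\,\nabla\psi + \psi\,\nabla w,
\end{equation*}
and the differentiability of a convolution, $\nabla w = (\nabla W)*(fg)$, which is justified by the compact-support modification of $W$ introduced in Section 1.2.

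First I would handle the cross terms $\|w\nabla\psi\|_{L^2}$ and $\|w\psi\|_{L^2}$ by Hölder in the form $\|w\,h\|_{L^2} \leq \|w\|_{L^\infty}\|h\|_{L^2}$, with $h=\nabla\psi$ or $h=\psi$. To control $\|w\|_{L^\infty}$ I would invoke Young's inequality in the form $\|W*(fg)\|_{L^\infty} \leq \|W\|_{L^2}\|fg\|_{L^2}$, which is legitimate because $W=1/|\mathbf{x}|$ restricted to a set of diameter $\mathrm{diam}(\Omega)$ lies in $L^2$. Then $\|fg\|_{L^2}$ is estimated by Hölder in $\Omega$, namely $\|fg\|_{L^2}\leq |\Omega|^{1/6}\|fg\|_{L^3}\leq |\Omega|^{1/6}\|f\|_{L^6}\|g\|_{L^6}\leq |\Omega|^{1/6}E_1^2\|f\|_{H^1_0}\|g\|_{H^1_0}$.

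Second, for the term $\|\psi\,\nabla w\|_{L^2}$ I would use the Hölder split $\|\psi\nabla w\|_{L^2} \leq \|\psi\|_{L^6}\|\nabla w\|_{L^3}$ and then apply Young's inequality to $\nabla w=(\nabla W)*(fg)$ in the form $\|\nabla w\|_{L^3} \leq \|\nabla W\|_{L^1}\|fg\|_{L^3}$. Since in $\mathbb{R}^3$ the pointwise bound $|\nabla W|\leq 1/|\mathbf{x}|^2$ is locally integrable and the truncation makes it globally in $L^1$, this is finite. Combining with $\|fg\|_{L^3} \leq E_1^2\|f\|_{H^1_0}\|g\|_{H^1_0}$ and $\|\psi\|_{L^6}\leq E_1\|\psi\|_{H^1_0}$ yields the required trilinear bound, with constant proportional to $E_1^3\|\nabla W\|_{L^1}$.

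Finally, I would add the three estimates to obtain a constant $C$ of the generic form $C_0 = E_1^2\bigl[E_1\|\nabla W\|_{L^1} + |\Omega|^{2/3}\|W\|_{L^2} + E_1^2|\Omega|\bigr]$ (the precise exponents on $|\Omega|$ arising from the specific Hölder splits chosen), and conclude (\ref{conprod}). The only subtle point, and really the only place care is needed, is in justifying differentiation under the convolution sign and the integrability of $W$, $\nabla W$ on $\Omega$: both rest on the compact-support reformulation of $W$ described earlier in the excerpt, after which the argument is purely routine interpolation using Young, Hölder, and Sobolev.
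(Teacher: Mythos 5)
Your proposal is correct and follows essentially the same route as the paper's proof: expand $\nabla(w\psi)$ by the product rule, pull $\nabla w = (\nabla W)\ast(fg)$ through the convolution, and close the estimates with Young, Hölder, and the Sobolev embedding $H^1_0\hookrightarrow L^6$. The only cosmetic difference is that you estimate the $H^1_0$ norm directly as $(\|\nabla\cdot\|_{L^2}^2 + \|\cdot\|_{L^2}^2)^{1/2}$, whereas the paper writes it in the equivalent dual (supremum) form and works with $L^1$ pairings; you also route all embeddings through $E_1$ and Hölder interpolation on the bounded $\Omega$, while the paper introduces separate constants $E_2, E_3$ for $L^3, L^4$, but these choices lead to the same conclusion.
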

\begin{proof}
We claim that
the following two inequalities are sufficient to prove 
(\ref{conprod}).
\begin{eqnarray*}
\|w\|_{W^{1,3}} &\leq& C_{1} \|f\|_{H^{1}_{0}}\|g\|_{H^{1}_{0}}  \\
\|w\|_{L^{\infty}} &\leq& C_{2} \|f\|_{H^{1}_{0}}\|g\|_{H^{1}_{0}}. 
\end{eqnarray*}
Suppose that these two inequalities hold. We show that the lemma follows. 
We use duality \cite[Theorem 4.3, p. 89]{Rudin} to estimate the
$H^{1}_{0}$ norm of $w \psi$. To begin, we have
$$
\|w \psi \|_{H^{1}_{0}} = 
\sup_{\|\omega\|_{H^{1}_{0}} \leq 1}\left\{\left|\int_{\Omega}\nabla
(w\psi) \cdotp \nabla \omega + 
\int_{\Omega}
w \psi \omega \right| \right\}. 
$$
Furthermore,
$$
\nabla (w \psi) = (\nabla w) \psi + w \nabla \psi,
$$
so that
$$
\nabla (w \psi) \cdotp \nabla \omega = 
(\nabla w) \psi \cdotp \nabla \omega + w \nabla \psi \cdotp
\nabla \omega.
$$
Upon taking the $L^{1}$ norm of both sides of this relation, we obtain,
$$
\|\nabla (w \psi) \cdotp \nabla \omega \|_{L^{1}} \leq 
\|(\nabla w) \psi \cdotp \nabla \omega \|_{L^{1}} + 
\|w \nabla \psi \cdotp
\nabla \omega\|_{L^{1}}.
$$
We now estimate each of these terms via the H\"{o}lder and Sobolev
inequalities. For the first term,
\begin{equation}
\label{I}
\|(\nabla w) \psi \cdotp \nabla \omega \|_{L^{1}} \leq
\|\psi\|_{L^{6}} \|\nabla w \|_{L^{3}} \|\nabla \omega \|_{L^{2}}
\leq E_{1} \|\psi \|_{H^{1}_{0}} C_{1} \|f \|_{H^{1}_{0}} \|g \|_{H^{1}_{0}}, 
\end{equation}
where $E_{1}$ is a Sobolev embedding constant 
($H^{1}_{0} \hookrightarrow  L^{6}$), 
and where we have used the
first inequality of the claim, together with the norm assumption on 
$\omega$.
For the second term above,
\begin{equation}
\label{II}
\|w \nabla \psi \cdotp
\nabla \omega\|_{L^{1}} \leq \|w\|_{L^{\infty}} \|\nabla \psi
\|_{L^{2}} \|\nabla \omega\|_{L^{2}}
\leq C_{2} \|f \|_{H^{1}_{0}} \|g\|_{H^{1}_{0}} 
\|\psi \|_{H^{1}_{0}}, 
\end{equation}
where we have used the
second inequality of the claim, together with the norm assumption on 
$\omega$.
We estimate the final term in the supremum. 
\begin{equation}
\label{III}
\|w \psi 
\omega\|_{L^{1}} \leq \|w\|_{L^{3}} \|\psi\|_{L^{3}}
\|\omega\|_{L^{3}}
\leq E_{2}^{2} 
\|\psi \|_{H^{1}_{0}} C_{1} \|f\|_{H^{1}_{0}} 
\|g\|_{H^{1}_{0}}, 
\end{equation}
where $E_{2}$ is a Sobolev embedding constant 
($H^{1}_{0} \hookrightarrow  L^{3}$), 
and we have used the norm
assumption on $\omega$. 
By assembling inequalities (\ref{I}, \ref{II}, \ref{III}), we obtain the
estimate of the lemma if the claim is valid. 

We now verify each of the inequalities of the claim.
For the first, we have
\begin{eqnarray}
\nabla w = \nabla
[W \ast (fg)] 
&=& \nabla W \ast (fg) \nonumber \\ 
\|\nabla w\|_{L^{3}} &\leq& \|\nabla W\|_{L^{1}} 
\|fg\|_{L^{3}} \nonumber \\
&\leq& 
\|\nabla W\|_{L^{1}} 
\|f\|_{L^{6}}\; \|g\|_{L^{6}} \nonumber \\
&\leq& 
\|\nabla W\|_{L^{1}} 
E_{1}^{2} 
\|f\|_{H^{1}_{0}}\; \|g\|_{H^{1}_{0}}, 
\label{CI}
\end{eqnarray}
where we have used the Young, H\"{o}lder, and Sobolev inequalities.
For the second inequality of the claim, we have, by similar inequalities, 
\begin{eqnarray}
\|w \|_{\infty} &=& \sup_{x \in \Omega}\left| \int_{\Omega}
\frac{f(y)g(y) dy}{|x - y|}\right| \nonumber \\
&\leq& \|W\|_{L^{2}} \|fg\|_{L^{2}} \nonumber \\
&\leq& 
\|W\|_{L^{2}} 
\|f\|_{L^{4}}\; \|g\|_{L^{4}} \nonumber \\
&\leq& 
\|W\|_{L^{2}} 
E_{3}^{2} 
\|f\|_{H^{1}_{0}}\; \|g\|_{H^{1}_{0}}. 
\label{CII}
\end{eqnarray}
Here, $E_{3}$ is a Sobolev embedding constant
($H^{1}_{0} \hookrightarrow  L^{4}$). 
This establishes the claim, with specific estimates for $C_{1}, C_{2}$, 
 and the proof is concluded.
\end{proof}
\begin{remark}
Since convolution with $W/(4 \pi)$ provides a right inverse for the (negative)
Laplacian, we could infer the $L^{\infty}$ property of $w$ from the theory
of elliptic equations (see \cite[Th. 8.16, p.181]{GT}). However, we
require the explicit inequality (\ref{CII}).
\end{remark}
\begin{theorem}
\label{hartreeLip}
For the Hartree potential,
$$
H = W \ast \rho, \; \rho = |\Psi|^{2} \; (\Psi \in H^{1}_{0}),
$$
and $\psi \in H^{1}_{0}$,
we have,
$$
\|[W \ast (\rho_{1} - \rho_{2})]\psi \|_{H^{1}_{0}} \leq C 
\|\Psi_{1} - \Psi_{2}\|_{H^{1}_{0}} \|\psi\|_{H^{1}_{0}}.
$$
This inequality remains valid when $\Psi_{1}, \Psi_{2}$ are
functions in $C(J; H^{1}_{0})$. The appropriate norm subscripts are
replaced by 
$C(J; H^{1}_{0})$. In this case,  
$C$ is explicitly discussed in 
Lemma \ref{contractionconstant}. 
\end{theorem}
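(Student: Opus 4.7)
The plan is to reduce the Hartree difference to the setting of Lemma \ref{3.1} via a standard algebraic factorization, and then to apply that lemma termwise.

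The key observation is the identity
\begin{equation*}
|\Psi_{1}|^{2} - |\Psi_{2}|^{2} = \Psi_{1}\,\overline{(\Psi_{1}-\Psi_{2})} + (\Psi_{1}-\Psi_{2})\,\overline{\Psi_{2}},
\end{equation*}
interpreted componentwise on the orbitals $\psi_{j,k}$, so that $\rho_{1}-\rho_{2}$ expands as a sum of $2N$ products, each carrying one factor drawn from $\Psi_{1}-\Psi_{2}$ and the complementary factor drawn from either $\Psi_{1}$ or $\Psi_{2}$. First I would record this identity, then use the linearity of the convolution with $W$ to write $W\ast(\rho_{1}-\rho_{2})$ as a finite sum of terms each of the form $W\ast(fg)$ with $f,g\in H^{1}_{0}$.

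Next, I would apply Lemma \ref{3.1} to each such term, taking $f$ and $g$ to be the two factors of the corresponding product and $\psi$ as given. This yields
\begin{equation*}
\|[W\ast(\rho_{1}-\rho_{2})]\psi\|_{H^{1}_{0}} \leq C\bigl(\|\Psi_{1}\|_{H^{1}_{0}} + \|\Psi_{2}\|_{H^{1}_{0}}\bigr)\|\Psi_{1}-\Psi_{2}\|_{H^{1}_{0}}\|\psi\|_{H^{1}_{0}},
\end{equation*}
where the generic constant absorbs $N$ and the embedding constants from Lemma \ref{3.1}. Because the theorem is to be applied on the invariant ball ${\overline{B(0,r)}}$ of Theorem \ref{EU}, both $\|\Psi_{i}\|_{H^{1}_{0}}$ are bounded by $r$, so the prefactor collapses to $2rC_{0}$, matching the constant $C=2rC_{0}$ stated in Lemma \ref{contractionconstant}.

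For the $C(J;H^{1}_{0})$ version, I would simply note that the pointwise-in-$t$ estimate above is uniform in $t\in J$, since the constants coming from Lemma \ref{3.1} and from the $r$-bound do not depend on $t$; taking suprema in $t$ on both sides converts the $H^{1}_{0}$ norms on the right into $C(J;H^{1}_{0})$ norms and preserves the inequality. I do not anticipate a genuine obstacle: the only subtlety is to record the factorization carefully enough that both the scalar and vector-valued cases drop out without repeating the work of Lemma \ref{3.1}, and to track the constants closely enough that the value $C=2rC_{0}$ asserted in Lemma \ref{contractionconstant} is justified explicitly.
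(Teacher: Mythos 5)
Your approach is correct, but it takes a genuinely different route from the paper. The paper factors the density difference as a difference of squares of real moduli,
$$
\rho_{1}-\rho_{2}=(|\Psi_{1}|-|\Psi_{2}|)(|\Psi_{1}|+|\Psi_{2}|),
$$
and then invokes the reverse triangle inequality on $f=|\Psi_{1}|-|\Psi_{2}|$ and the ordinary triangle inequality on $g=|\Psi_{1}|+|\Psi_{2}|$ before feeding these into Lemma \ref{3.1}. You instead use the complex bilinear expansion
$$
|\Psi_{1}|^{2}-|\Psi_{2}|^{2}=\Psi_{1}\,\overline{(\Psi_{1}-\Psi_{2})}+(\Psi_{1}-\Psi_{2})\,\overline{\Psi_{2}},
$$
applied componentwise over the orbitals, and then apply Lemma \ref{3.1} to each of the resulting products. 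The tradeoff is worth noting. The paper's factorization is shorter to write down, but it requires $\||\Psi_{1}|-|\Psi_{2}|\|_{H^{1}_{0}}\leq\|\Psi_{1}-\Psi_{2}\|_{H^{1}_{0}}$; the pointwise reverse triangle inequality gives the $L^{2}$ part of this at once, but the gradient part (controlling $\|\nabla(|\Psi_{1}|-|\Psi_{2}|)\|_{L^{2}}$ by $\|\nabla(\Psi_{1}-\Psi_{2})\|_{L^{2}}$) is not a consequence of the reverse triangle inequality alone and requires a separate argument that the paper's proof does not supply. Your decomposition sidesteps this entirely: one of the two factors in each term is literally a component of $\Psi_{1}-\Psi_{2}$, so the $H^{1}_{0}$ Lipschitz bound is immediate, and the complementary factor is a component of $\Psi_{1}$ or $\Psi_{2}$, bounded by $r$ on the invariant ball, recovering the constant $2rC_{0}$ of Lemma \ref{contractionconstant}. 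Your closing remark on the $C(J;H^{1}_{0})$ version (take suprema in $t$, using $t$-independence of the constants) also matches what the paper intends. In short: your route is more robust than the paper's, not merely equivalent to it.
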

\begin{proof}
We apply the previous lemma after the simple factorization, 
$$
\rho_{1} - \rho_{2} = 
(|\Psi_{1}| - |\Psi_{2}|)
(|\Psi_{1}| + |\Psi_{2}|).
$$
In the lemma, we select
$$
f = |\Psi_{1}| - |\Psi_{2}|, \; 
g = |\Psi_{1}| + |\Psi_{2}|.
$$
The use of the reverse triangle inequality applied to $\|f\|_{H^{1}_{0}}$ 
and the standard triangle inequality applied to 
$\|g\|_{H^{1}_{0}}$ implies the estimate.
\end{proof}
\subsection{Hypothesis
for the exchange-correlation potential}
The hypotheses required of the exchange-correlation potential $\Phi$ in 
\cite{J1} are listed in the appendix. An additional hypothesis is required
if Fr\'{e}chet derivatives are required, as is the case in this article.
The hypothesis mirrors (\ref{conprod}). Note that $\Phi$ is defined in
section 1.2. The integrand $\phi$ may be a functional of $\rho$, as is the
case for the Hartree potential. 
\begin{assumption}
\label{3.1def}
In addition to the hypotheses on $\Phi$ expressed in the appendix, we
assume in addition the following. 
\begin{itemize}
\item
The functional derivative of $\Phi$ with respect to $\rho$ exists, is
defined on the product of 
$C(J; H^{1}_{0})$ functions, is linear, both in 
the product, and in the members of the product, 
and satisfies, for $z = (\partial \Phi/\partial
\rho)(fg)$, and $f,g \in 
C(J; H^{1}_{0}), 
\psi \in H^{1}_{0}$, 
\begin{equation}
\label{conprod2}
\|z \psi\|_{C(J; H^{1}_{0})} 
\leq C \|f\|_{C(J; H^{1}_{0})} \|g\|_{C(J; H^{1}_{0})}
\|\psi\|_{H^{1}_{0}},
\end{equation}
for some constant $C$.
\end{itemize}
\end{assumption} 
\subsection{G\^{a}teaux differentiability}
For application in this section, we rewrite 
(\ref{IDENTITY}) and 
(\ref{LipforV})  
in slightly simplified notation for use in this section:
\begin{equation}
U^{\rho_{1}}\Psi_{0}(t) - U^{\rho_{2}}\Psi_{0}(t) =  
\frac{i}{\hbar}\int_{0}^{t} U^{\rho_{1}}(t,s)[V_{\rm e}(s, \rho_{1}) -
V_{\rm e}(s, \rho_{2})]U^{\rho_{2}}(s,0)\Psi_{0} \; ds.
\label{IDENTITY2}
\end{equation}
\begin{equation}
\label{LipV}
\|[V_{\rm e}({\rho_{1}})-
V_{\rm e}({\rho_{2}})]\psi\|_{C(J;H^{1}_{0})} \leq  
C\|\Psi_{1} -
\Psi_{2} 
\|_{C(J;H^{1}_{0})} \|\psi\|_{H^{1}_{0}}. 
\end{equation} 
\begin{remark}
The constant $C$ in inequality (\ref{LipV}) 
and a uniform bound for the operator norm 
$\|U^{\rho}(t,s)\|_{\infty, H^{1}_{0}}$ 
are discussed in
Lemma \ref{contractionconstant}.  
We will represent such a bound by $\sup \|U^{\rho}\|$ below.
\end{remark}
\begin{lemma}
\label{uniform}
Suppose that $\Psi_{\epsilon}$ converges to $\Psi$ in $C(J; H^{1}_{0})$
as $\epsilon \rightarrow 0$. Then $U^{\rho_{\epsilon}}$ converges to
$U^{\rho}$ in the operator topology, uniformly in $t,s$.
In fact, the convergence is of order 
$O(\|\Psi_{\epsilon} - \Psi\|_{C(J;H^{1}_{0})})$.
\end{lemma}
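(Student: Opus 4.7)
My plan is to exploit identity (\ref{IDENTITY2}) directly, but applied with an arbitrary test vector in $H^1_0$ in place of $\Psi_0$, so as to recover operator-norm rather than pointwise information. Kato's construction yields the identity in the form
\begin{equation*}
[U^{\rho_\epsilon}(t,s') - U^{\rho}(t,s')]\psi = \frac{i}{\hbar}\int_{s'}^{t} U^{\rho_\epsilon}(t,s)\bigl[V_{\rm e}(s,\rho_\epsilon) - V_{\rm e}(s,\rho)\bigr] U^{\rho}(s,s')\psi \, ds,
\end{equation*}
for every $\psi \in H^1_0$ and every pair $s', t \in J$. This is the natural vehicle because it localizes the difference of the evolution operators to a difference of the potentials, which is exactly what (\ref{LipV}) controls.

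Next I would take the $H^1_0$ norm of both sides and estimate under the integral. On the right, $\|U^{\rho_\epsilon}(t,s)\|_{H^1_0}$ is bounded by the uniform quantity $\sup\|U^{\rho}\|$ from Lemma \ref{contractionconstant}; then (\ref{LipV}) applied to the inner factor gives
\begin{equation*}
\bigl\|[V_{\rm e}(s,\rho_\epsilon) - V_{\rm e}(s,\rho)] U^{\rho}(s,s')\psi\bigr\|_{H^1_0} \leq C\,\|\Psi_\epsilon - \Psi\|_{C(J;H^1_0)}\,\|U^{\rho}(s,s')\psi\|_{H^1_0},
\end{equation*}
and a second application of the uniform bound yields $\|U^{\rho}(s,s')\psi\|_{H^1_0} \leq \sup\|U^{\rho}\|\,\|\psi\|_{H^1_0}$. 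Integrating over $s \in [s',t] \subset J$ and taking the supremum over $\psi$ with $\|\psi\|_{H^1_0}\leq 1$ then gives
\begin{equation*}
\sup_{t,s' \in J} \|U^{\rho_\epsilon}(t,s') - U^{\rho}(t,s')\|_{H^1_0} \leq \frac{C\,T}{\hbar}\,(\sup\|U^{\rho}\|)^{2}\,\|\Psi_\epsilon - \Psi\|_{C(J;H^1_0)},
\end{equation*}
which is the claimed order $O(\|\Psi_\epsilon - \Psi\|_{C(J;H^1_0)})$ in the operator topology, uniformly in $t,s$.

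The one point requiring care, and what I would flag as the only real obstacle, is that the bound $\sup\|U^\rho\|$ drawn from Lemma \ref{contractionconstant} must be taken uniformly in $\epsilon$, since the bound in (\ref{estEVY}) depends on the density $\rho$ through $r$. This is not a problem: because $\Psi_\epsilon \to \Psi$ in $C(J;H^1_0)$ the family $\{\Psi_\epsilon\}$ lies in a bounded ball of $C(J;H^1_0)$, so the corresponding densities $\rho_\epsilon$ satisfy a uniform bound in the relevant norms, and hence the Kato estimate (\ref{estEVY}) (incremented as in Lemma \ref{contractionconstant} when $\Phi$ is present) furnishes a common bound for $\|U^{\rho_\epsilon}\|_{\infty,H^1_0}$ valid for all sufficiently small $\epsilon$. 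Once this uniformity is recorded, the estimate above is immediate and concludes the proof.
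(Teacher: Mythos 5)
Your argument is essentially the paper's own proof: the paper uses the identical operator identity (labelled (\ref{IDENTITY3}) there, with $s$ in the role of your $s'$), evaluates at an arbitrary $\psi\in H^1_0$ of norm at most one, and applies the uniform bound on the evolution operators together with the Lipschitz estimate (\ref{LipV}) to obtain the $O(\|\Psi_\epsilon-\Psi\|_{C(J;H^1_0)})$ rate. Your closing remark about taking $\sup\|U^{\rho_\epsilon}\|$ uniformly in $\epsilon$ is a sound clarification of a point the paper leaves implicit (it relies on the convention stated in the preceding Remark that $\sup\|U^{\rho}\|$ denotes a uniform bound), but it does not change the method.
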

\begin{proof}
We use the following operator representation for
$U^{\rho_{\epsilon}}(t,s) - U^{\rho}(t,s)$, for $0\leq s < t$, which is the
appropriate substitute for (\ref{IDENTITY2}): 
\begin{equation}
U^{\rho_{\epsilon}}(t,s) - U^{\rho}(t,s) =  
\frac{i}{\hbar}\int_{s}^{t} U^{\rho_{\epsilon}}(t,r)
[V_{\rm e}(r, \rho_{\epsilon}) -
V_{\rm e}(r, \rho)]U^{\rho}(r,s) \; dr.
\label{IDENTITY3}
\end{equation}
We estimate this operator at an arbitrary $\psi \in  
H^{1}_{0}$ of norm not exceeding one:
\begin{equation}
\label{upperbd}
(T/\hbar)\; 
\sup \|U^{\rho}\|
\sup_{0 \leq s \leq r \leq T} \|
[V_{\rm e}(r,\rho_{\epsilon})-V_{\rm e}(r,\rho)]U^{\rho}(r,s)\psi\|_{H^{1}_{0}}.
\end{equation}
An application of (\ref{LipV})  yields the
bound of a constant times
$$
\|\Psi_{\epsilon} - \Psi\|_{C(J;H^{1}_{0})},
$$
which completes the proof.
\end{proof}
\begin{proposition}
\label{Dif}
The operator $K$ is G\^{a}teaux differentiable on 
$C(J;H^{1}_{0})$. The derivative is given by (\ref{defKprime}) below, and
is a bounded linear operator on 
$C(J;H^{1}_{0})$, 
with bound given by (\ref{adapest}) below.
\end{proposition}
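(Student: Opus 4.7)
The plan is to start from the integral identity (\ref{IDENTITY2}). Given $\Psi^{*} \in C(J;H^{1}_{0})$ and a direction $h \in C(J;H^{1}_{0})$, set $\rho_{\epsilon} = |\Psi^{*}+\epsilon h|^{2}$ and $\rho = |\Psi^{*}|^{2}$, and apply (\ref{IDENTITY2}) with $\Psi^{*}_{1} = \Psi^{*}+\epsilon h$, $\Psi^{*}_{2} = \Psi^{*}$. Dividing by $\epsilon$ produces a difference quotient whose pointwise-in-$s$ integrand involves $U^{\rho_{\epsilon}}(t,s)$, the potential difference quotient $\epsilon^{-1}[V_{\mathrm{e}}(s,\rho_{\epsilon})-V_{\mathrm{e}}(s,\rho)]$, and $U^{\rho}(s,0)\Psi_{0}$. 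The algebraic identity $\rho_{\epsilon}-\rho = \epsilon(\bar{\Psi^{*}}h + \Psi^{*}\bar h) + \epsilon^{2}|h|^{2}$ exposes the linearization of the quadratic map $\Psi^{*}\mapsto\rho$ that will drive the limit.

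Passing $\epsilon \to 0$ under the integral yields the candidate derivative
\begin{equation}
\label{defKprime}
K'(\Psi^{*})[h](t) = \frac{i}{\hbar}\int_{0}^{t} U^{\rho}(t,s)\,V_{\mathrm{e}}'(s,\rho)[\bar{\Psi^{*}}h + \Psi^{*}\bar h]\,U^{\rho}(s,0)\Psi_{0}\,ds,
\end{equation}
where $V_{\mathrm{e}}'(s,\rho)[\delta\rho] = W*\delta\rho + (\partial\Phi/\partial\rho)[\delta\rho]$. Two ingredients justify the limit. First, Lemma \ref{uniform} supplies $U^{\rho_{\epsilon}} \to U^{\rho}$ in operator norm, uniformly in $t,s$, at rate $O(\epsilon\|h\|_{C(J;H^{1}_{0})})$. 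Second, the difference quotient of $V_{\mathrm{e}}$ converges to $V_{\mathrm{e}}'(s,\rho)$ applied to the linearized density: the external potential $V$ is $\rho$-independent and drops out; the Hartree term is linear in $\rho$ so its quotient is exact modulo the $\epsilon|h|^{2}$ remainder, which is controlled by Lemma \ref{3.1} applied with $f=g=h$; and the $\Phi$-contribution is handled by Assumption \ref{3.1def}, whose hypothesis (\ref{conprod2}) is tailored precisely for this limit.

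To establish linearity and boundedness, I estimate the $H^{1}_{0}$-norm of the integrand in (\ref{defKprime}) pointwise in $s$. Lemma \ref{3.1} applied with one factor from $\Psi^{*}$ and the other from $h$ (together with the conjugate pairing) bounds the Hartree piece, and (\ref{conprod2}) bounds the $\Phi$-piece in the same fashion. Combining these with the uniform operator bound $\|U\|_{\infty,H^{1}_{0}}$ of Lemma \ref{contractionconstant} and a factor $T/\hbar$ from the $s$-integration yields an estimate of the form
\begin{equation}
\label{adapest}
\|K'(\Psi^{*})[h]\|_{C(J;H^{1}_{0})} \leq \frac{CT}{\hbar}\,\|U\|_{\infty,H^{1}_{0}}^{2}\,\|\Psi_{0}\|_{H^{1}_{0}}\,\|\Psi^{*}\|_{C(J;H^{1}_{0})}\,\|h\|_{C(J;H^{1}_{0})}.
\end{equation}
Linearity in $h$ is manifest from (\ref{defKprime}), and (\ref{adapest}) gives the boundedness as an operator on $C(J;H^{1}_{0})$.

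The main technical obstacle is the simultaneous passage to the limit of both the operator factor $U^{\rho_{\epsilon}}$ and the potential difference quotient inside the $s$-integral. What makes this tractable is that the bound of Lemma \ref{contractionconstant} on $\|U^{\rho_{\epsilon}}\|_{\infty,H^{1}_{0}}$ is uniform for $\epsilon$ small (since $\rho_{\epsilon}$ stays in a bounded set as $\epsilon \to 0$), so a dominated-convergence argument at the level of the $s$-integral applies. The quadratic remainder $\epsilon|h|^{2}$ contributes an $O(\epsilon)$ term after division by $\epsilon$ and vanishes in the limit; this is also where the restriction to \emph{G\^ateaux} (rather than Fr\'echet) differentiability is convenient, since we need only directional limits rather than uniformity in $h$.
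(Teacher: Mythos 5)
Your proposal follows essentially the same route as the paper's own proof: both start from the identity (\ref{IDENTITY2}), form the difference quotient with $\rho_{\epsilon}=|\Psi+\epsilon\omega|^{2}$, use the algebraic expansion $\rho_{\epsilon}-\rho = 2\epsilon\,\mathrm{Re}(\bar\Psi\omega)+\epsilon^{2}|\omega|^{2}$, pass to the limit via Lemma \ref{uniform} together with Lemma \ref{3.1} and Assumption \ref{3.1def}, and then bound the resulting linear operator with the uniform evolution-operator estimate from Lemma \ref{contractionconstant}. The only cosmetic difference is that your version of (\ref{adapest}) records an explicit factor $\|\Psi\|_{C(J;H^{1}_{0})}$ whereas the paper absorbs the corresponding $r$-dependence into the discussion surrounding $C_{0}$; the substance of the argument is identical.
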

\begin{proof}
Let $\Psi$ be a given element of $C(J; H^{1}_{0})$ and set $\rho = |\Psi|^{2}$.
We begin with the formula (\ref{IDENTITY2}) 
and make the 
identification, 
$$\rho_{\epsilon} =  
|\Psi + \epsilon \omega|^{2}, \;  \mbox{for} \; \omega \in H^{1}_{0}, \;
\epsilon \in {\mathbb R}, \epsilon \not=0. $$
By direct calculation, this gives for the G\^{a}teaux derivative of $K$
at $\Psi$, evaluated at arbitrary $\omega$: 
$$
K^{\prime}(\Psi)[\omega] = \lim_{\epsilon \rightarrow 0} 
\frac{U^{\rho_{\epsilon}}\Psi_{0}(t) - 
U^{\rho}\Psi_{0}(t)}{\epsilon} = 
$$
\begin{equation}
\frac{2i}{\hbar}\int_{0}^{t} 
U^{\rho}(t,s)\left[\mbox{Re}({\bar \Psi} \omega) \ast W
+ ({\partial \Phi}{\partial \rho})  
\mbox{Re}({\bar \Psi} \omega) \right] 
U^{\rho}(s,0)\Psi_{0} \; ds.
\label{defKprime}
\end{equation}
Indeed, by direct calculation, we obtain
$$
\frac{U^{\rho_{\epsilon}}\Psi_{0}(t) - U^{\rho}\Psi_{0}(t)}{\epsilon} = 
$$
$$
\frac{2i}{\hbar}\int_{0}^{t} 
U^{\rho_{\epsilon}}(t,s)\left[\mbox{Re}({\bar \Psi} \omega) \ast W
+ ({\partial \Phi}{\partial \rho})  
\mbox{Re}({\bar \Psi} \omega) \right] 
U^{\rho}(s,0)\Psi_{0} \; ds \; +
$$
$$
\frac{i\epsilon}{\hbar}\int_{0}^{t} 
U^{\rho_{\epsilon}}(t,s)\left[|\omega|^{2} \ast W
+ (\partial \Phi/\partial \rho)  
|\omega|^{2} \right] 
U^{\rho}(s,0)\Psi_{0} \; ds. 
$$
An application of Lemma \ref{uniform} yields the limit. In fact, the first term
converges to the derivative, and the second term converges to zero; note
that the multiplier of $\epsilon$ remains bounded.

We now verify that
$K^{\prime}(\Psi)$ is a bounded linear operator on $C(J; H^{1}_{0})$.
By a direct estimate of the representation for 
$K^{\prime}(\Psi)[\omega]$, as given in (\ref{defKprime}), we have 
the norm estimate,
\begin{equation}
\label{adapest}
\|K^{\prime}(\Psi)\| \leq (2 C_{0} T/\hbar) 
 \|U^{\rho}\|_{\infty, H_{0}^{1}}^{2} 
\|\Psi_{0}\|_{H^{1}_{0}}. 
\end{equation}
The constant $C_{0}$ and the operator norm 
$ \|U^{\rho}\|_{\infty, H_{0}^{1}}^{2}$ are discussed in 
Lemma \ref{contractionconstant}.
This concludes the proof.
\end{proof}
\subsection{Lipschitz continuous Fr\'{e}chet differentiability}
\begin{proposition}
\label{LF}
The operator $K$ is continuously G\^{a}teaux differentiable  
and thus continuously Fr\'{e}chet differentiable. The derivative is, 
in fact, Lipschitz continuous on 
${\overline {B(0, r)}} \subset C(J; H^{1}_{0})$.
\end{proposition}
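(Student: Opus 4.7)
The plan is to upgrade the formula for $K'(\Psi)$ from Proposition \ref{Dif} into a Lipschitz estimate, and then invoke the standard fact that a map with a continuous G\^{a}teaux derivative on an open convex set is Fr\'{e}chet differentiable with the same derivative. Fix $\Psi_{1},\Psi_{2}\in \overline{B(0,r)}$ with $\rho_{j}=|\Psi_{j}|^{2}$, and an arbitrary test direction $\omega \in C(J;H^{1}_{0})$ of unit norm. Writing out $K'(\Psi_{1})[\omega]-K'(\Psi_{2})[\omega]$ from (\ref{defKprime}), the integrand is a product of three factors depending on the index $j$: the outer evolution operator $U^{\rho_{j}}(t,s)$, the bracket
\[
B_{j}(\omega):=\mathrm{Re}(\bar{\Psi}_{j}\omega)\ast W + (\partial\Phi/\partial\rho)_{j}\,\mathrm{Re}(\bar{\Psi}_{j}\omega),
\]
and the inner evolution operator $U^{\rho_{j}}(s,0)\Psi_{0}$. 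I would telescope through these three slots, producing three integrals: one containing $[U^{\rho_{1}}(t,s)-U^{\rho_{2}}(t,s)]B_{2}(\omega)\,U^{\rho_{2}}(s,0)\Psi_{0}$, one containing $U^{\rho_{1}}(t,s)[B_{1}(\omega)-B_{2}(\omega)]U^{\rho_{2}}(s,0)\Psi_{0}$, and one containing $U^{\rho_{1}}(t,s)B_{1}(\omega)[U^{\rho_{1}}(s,0)-U^{\rho_{2}}(s,0)]\Psi_{0}$.

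Each of these three pieces can be bounded by a constant multiple of $\|\Psi_{1}-\Psi_{2}\|_{C(J;H^{1}_{0})}\,\|\omega\|_{C(J;H^{1}_{0})}$. The two outer pieces use Lemma \ref{uniform}, which gives $\|U^{\rho_{1}}-U^{\rho_{2}}\|=O(\|\Psi_{1}-\Psi_{2}\|_{C(J;H^{1}_{0})})$ uniformly in $t,s$; the $B_{j}(\omega)$ factors in those pieces are controlled by Lemma \ref{3.1} with $f=\bar{\Psi}_{j}$, $g=\omega$ (plus Assumption \ref{3.1def} for the exchange-correlation contribution), using the a priori bound $\|\Psi_{j}\|\le r$. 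For the middle piece, the Hartree part of $B_{1}-B_{2}$ is $\mathrm{Re}((\bar{\Psi}_{1}-\bar{\Psi}_{2})\omega)\ast W$, which Lemma \ref{3.1} (applied with $f=\bar{\Psi}_{1}-\bar{\Psi}_{2}$, $g=\omega$) handles directly, and the exchange-correlation part $(\partial\Phi/\partial\rho)_{1}\mathrm{Re}(\bar{\Psi}_{1}\omega)-(\partial\Phi/\partial\rho)_{2}\mathrm{Re}(\bar{\Psi}_{2}\omega)$ is itself telescoped into a piece with the same $\partial\Phi/\partial\rho$ acting on $\mathrm{Re}((\bar{\Psi}_{1}-\bar{\Psi}_{2})\omega)$, estimated by (\ref{conprod2}), plus a piece encoding the $\rho$-dependence of $\partial\Phi/\partial\rho$ itself, estimated via the Lipschitz hypothesis on $\partial\Phi/\partial\rho$ supplied by Assumption \ref{3.1def}. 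Summing yields
\[
\|K'(\Psi_{1})-K'(\Psi_{2})\|_{\mathcal{L}(C(J;H^{1}_{0}))} \le L\,\|\Psi_{1}-\Psi_{2}\|_{C(J;H^{1}_{0})},
\]
where $L$ is an explicit polynomial in $r$, $T/\hbar$, $\|U\|_{\infty,H^{1}_{0}}$, $\|\Psi_{0}\|_{H^{1}_{0}}$, and the constants from Lemma \ref{3.1} and Assumption \ref{3.1def}.

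Once Lipschitz continuity of $K'$ is in hand, continuity of the G\^{a}teaux derivative is immediate; a standard theorem (for example, in Cartan's calculus in Banach spaces) then promotes $K$ to a Fr\'{e}chet-differentiable map on the open interior of $\overline{B(0,r)}$, with Fr\'{e}chet derivative coinciding with the G\^{a}teaux derivative already constructed, and Lipschitz continuity of the Fr\'{e}chet derivative transfers unchanged to the closed ball by continuity.

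The main obstacle is the middle telescoped piece: the exchange-correlation term requires that $\partial\Phi/\partial\rho$ be Lipschitz in $\rho$ in the bilinear sense of (\ref{conprod2}), not merely bounded, so the argument effectively relies on interpreting Assumption \ref{3.1def} as supplying such a Lipschitz property (analogous to how Theorem \ref{hartreeLip} upgrades Lemma \ref{3.1} for the Hartree potential). The Hartree and evolution-operator pieces, by contrast, reduce cleanly to tools already established in Section 3.1 and Lemma \ref{uniform}, so the bookkeeping there is routine once the telescoping is laid out.
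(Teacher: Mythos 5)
Your proposal follows essentially the same route as the paper's proof: the paper also telescopes the difference $K'[\Psi_{1}](\omega)-K'[\Psi_{2}](\omega)$ into three pieces $D_{1},D_{2},D_{3}$, one for each of the three slots (outer evolution operator, bracket, inner evolution operator), and estimates each using the uniform bound on $\|U\|_{\infty,H^{1}_{0}}$, the $O(\|\Psi_{1}-\Psi_{2}\|)$ convergence of the evolution operators from Lemma \ref{uniform}, and the bilinear bounds of Lemma \ref{3.1} and Assumption \ref{3.1def}. The upgrade from continuous G\^{a}teaux to Fr\'{e}chet differentiability is handled in the same implicit way.

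Worth flagging: your concern about the middle piece is well founded and points to a gap the paper glosses over. In the paper's $D_{2}$, the operator $\partial\Phi/\partial\rho$ appears without a subscript, acting on $\mathrm{Re}((\bar\Psi_{1}-\bar\Psi_{2})\omega)$; the algebraic identity $D_{1}+D_{2}+D_{3}=K'[\Psi_{1}](\omega)-K'[\Psi_{2}](\omega)$ holds exactly only if $\partial\Phi/\partial\rho$ is the same operator whether evaluated at the base point $\rho_{1}$ or $\rho_{2}$. Assumption \ref{3.1def} as stated gives bilinearity and the bound (\ref{conprod2}), but neither asserts base-point independence nor a Lipschitz dependence of $\partial\Phi/\partial\rho$ on $\rho$. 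If $\Phi$ is genuinely nonlinear in $\rho$ one needs the extra telescoping you describe and the corresponding Lipschitz hypothesis on the functional derivative; the paper silently subsumes that term. Your version is the more careful one on this single point.
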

\begin{proof}
It is sufficient to prove the Lipschitz continuity of the G\^{a}teaux
derivative, as obtained in (\ref{defKprime}). 
For given $\Psi_{1}$ and $\Psi_{2}$, and 
arbitrary $\omega, \|\omega\|_{H^{1}_{0}} \leq 1$, 
we write the difference of
$K^{\prime}[\Psi_{1}](\omega)$ and 
$K^{\prime}[\Psi_{2}](\omega)$ as: 
$$
K^{\prime}[\Psi_{1}](\omega) -
K^{\prime}[\Psi_{2}](\omega) =
$$
$$
  \frac{2i}{\hbar}\int_{0}^{t} 
U^{\rho_{1}}(t,s)\left[\mbox{Re}({\bar \Psi_{1}} \omega) \ast W
+  \frac{\partial \Phi}{\partial \rho}  
\mbox{Re}({\bar \Psi_{1}} \omega)  \right] 
U^{\rho_{1}}(s,0)\Psi_{0} \; ds \;-
$$
$$
  \frac{2i}{\hbar}\int_{0}^{t} 
U^{\rho_{2}}(t,s)\left[\mbox{Re}({\bar \Psi_{2}} \omega) \ast W
+  \frac{\partial \Phi}{\partial \rho}  
\mbox{Re}({\bar \Psi_{2}} \omega)  \right] 
U^{\rho_{2}}(s,0)\Psi_{0} \; ds.
$$
The latter difference can be written as the sum of the three differences,
$D_{1}, D_{2}, D_{3}$, where $D_{1} =$
$$
  \frac{2i}{\hbar}\int_{0}^{t} 
U^{\rho_{1}}(t,s)\left[\mbox{Re}({\bar \Psi_{1}} \omega) \ast W
+  \frac{\partial \Phi}{\partial \rho}  
\mbox{Re}({\bar \Psi_{1}} \omega)  \right] 
(U^{\rho_{1}}(s,0)-U^{\rho_{2}}(s,0))\Psi_{0} \; ds, 
$$
where $D_{2} = $
$$
  \frac{2i}{\hbar}\int_{0}^{t} 
U^{\rho_{1}}(t,s)\left[\mbox{Re}(({\bar \Psi_{1}}-{\bar\Psi_{2}}) \omega)\ast W
+  \frac{\partial \Phi}{\partial \rho}  
\mbox{Re}(({\bar \Psi_{1}} -{\bar \Psi_{2}}) \omega)  \right] 
U^{\rho_{2}}(s,0)\Psi_{0} \; ds, 
$$
and where $D_{3} =$
$$
  \frac{2i}{\hbar}\int_{0}^{t} 
[U^{\rho_{1}}(t,s)-
U^{\rho_{2}}(t,s)]
\left[\mbox{Re}({\bar \Psi_{2}} \omega) \ast W
+  \frac{\partial \Phi}{\partial \rho}  
\mbox{Re}({\bar \Psi_{2}} \omega)  \right] 
U^{\rho_{2}}(s,0)\Psi_{0} \; ds.
$$
\begin{itemize}
\item
Estimation of $D_{1}$. 
\end{itemize}
We estimate from left to right inside the integral as follows. 
The uniform boundedness of the evolution operator, followed by the
combination of Lemma \ref{3.1} and Assumptions \ref{3.1def}, 
gives the estimate for the $D_{1}$ contribution:
$$
\frac{2rT\gamma_{T}}{\hbar} \beta \|U\|_{\infty, H^{1}_{0}} 
\|\Psi_{1} - \Psi_{2}\|_{C(J; H^{1}_{0})},
$$
where $\beta$ is defined below. Here, $r$ is defined in the
introduction in (\ref{definitionr})  
and $\gamma_{T}, \|U\|_{\infty, H^{1}_{0}}$ are discussed in Lemma
\ref{contractionconstant}.
\begin{itemize}
\item
Estimation of $D_{2}$. 
\end{itemize}
Again, we estimate from left to right inside the integral, and utilize 
the uniform boundedness of the evolution operator. 
The 
combination of Lemma \ref{3.1} and Assumptions \ref{3.1def} yields the
result. Specifically, we have 
the estimate for the $D_{2}$ contribution:
$$
\frac{2T}{\hbar} \beta \|U\|_{\infty, H^{1}_{0}}^{2}
\|\Psi_{0}\|_{H^{1}_{0}} 
\|\Psi_{1} - \Psi_{2}\|_{C(J; H^{1}_{0})}.
$$
\begin{itemize}
\item
Estimation of $D_{3}$. 
\end{itemize}
The reasoning is similar to that of the estimation of $D_{1}$.
We have 
the estimate for the $D_{3}$ contribution:
$$
\frac{2rT\gamma_{T}}{\hbar} \beta \|U\|_{\infty, H^{1}_{0}} 
\|\Psi_{1} - \Psi_{2}\|_{C(J; H^{1}_{0})}.
$$
It remains to define $\beta$.
If $\Phi$ is not included in the effective potential, then $\beta$ is
simply the constant $C_{0}$ appearing in (\ref{constantC}).
If $\Phi$ is included, then this value must be incremented by the constant
appearing in (\ref{conprod2}).
\end{proof}
\subsection{Invertibility}
The following proposition addresses the invertibility at the fixed point 
of 
$I - K^{\prime}(\Psi)$, on the space $C(J; H^{1}_{0})$. 
\begin{proposition}
\label{INV}
We denote by $\Psi$ the unique fixed point of $K$.
The operator,
$$
S^{\prime}(\Psi) = I - K^{\prime}(\Psi),
$$
is invertible on $C(J; H^{1}_{0})$. 
\end{proposition}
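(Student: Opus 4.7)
The plan is to exploit the Volterra structure of $K^{\prime}(\Psi)$ that is visible in the representation (\ref{defKprime}), and then to invert $I - K^{\prime}(\Psi)$ by summing a Neumann series whose terms exhibit factorial decay. Although the uniform operator bound (\ref{adapest}) permits $\|K^{\prime}(\Psi)\| \geq 1$ when $T$ is not small, the Volterra nature of the defining integral forces the iterates to shrink rapidly enough that the series converges globally on $C(J; H^{1}_{0})$ without any smallness restriction on $T$.

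First I will observe from (\ref{defKprime}) that $(K^{\prime}(\Psi)\omega)(t)$ depends only on the restriction $\omega|_{[0,t]}$, because the defining integral runs from $0$ to $t$. Second, I will re-estimate the integrand pointwise in $s$ rather than uniformly, which is the key step. Applying Lemma \ref{3.1} with $f = \Psi(s)$, $g = \omega(s)$, $\psi = U^{\rho}(s,0)\Psi_{0}$ to the Hartree contribution, together with Assumption \ref{3.1def} for the exchange-correlation contribution, and then using $\|\Psi(s)\|_{H^{1}_{0}} \leq r$ (since $\Psi \in \overline{B(0,r)}$) together with the uniform bound $\|U^{\rho}\|_{\infty, H^{1}_{0}}$ from Lemma \ref{contractionconstant}, I will arrive at an estimate of the form
\begin{equation*}
\|(K^{\prime}(\Psi)\omega)(t)\|_{H^{1}_{0}} \leq A \int_{0}^{t} \|\omega(s)\|_{H^{1}_{0}} \, ds,
\end{equation*}
where $A = (2\beta r/\hbar)\,\|U^{\rho}\|_{\infty, H^{1}_{0}}^{2}\,\|\Psi_{0}\|_{H^{1}_{0}}$, with $\beta$ the constant introduced in the proof of Proposition \ref{LF}.

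Third, I will iterate this Volterra-type bound. A standard induction on nested integrals gives
\begin{equation*}
\|((K^{\prime}(\Psi))^{n}\omega)(t)\|_{H^{1}_{0}} \leq \frac{(At)^{n}}{n!}\,\|\omega\|_{C(J; H^{1}_{0})},
\end{equation*}
so that $\|(K^{\prime}(\Psi))^{n}\|_{C(J;H^{1}_{0}) \to C(J;H^{1}_{0})} \leq (AT)^{n}/n!$. The Neumann series $\sum_{n=0}^{\infty}(K^{\prime}(\Psi))^{n}$ therefore converges absolutely in operator norm to a bounded operator of norm at most $e^{AT}$, which furnishes the inverse of $I - K^{\prime}(\Psi)$ on $C(J; H^{1}_{0})$.

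The only real obstacle is conceptual, not technical: one must resist the temptation to work only with the $C(J;H^{1}_{0})$-level bound (\ref{adapest}), which by itself cannot deliver invertibility for arbitrary $T$. The argument hinges on upgrading to the pointwise-in-$t$ Volterra bound above, after which the remainder is standard. I note finally that the proof uses only $\Psi \in \overline{B(0,r)}$ and not any special property of the fixed point, so the same argument yields invertibility of $I - K^{\prime}(v)$ for every $v \in \overline{B(0,r)}$, which is precisely what Proposition \ref{VK} will need in the application to Newton iteration.
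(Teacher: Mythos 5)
Your proof is correct and takes a genuinely different route from the paper's. The paper splits the problem: it proves injectivity by applying Gronwall's inequality to the estimate $\|\omega(\cdot,t)\|_{H^1_0}\leq C\int_0^t\|\omega(\cdot,s)\|_{H^1_0}\,ds$, then proves surjectivity by showing $L\psi=K^{\prime}(\Psi)\psi+f$ is a strict contraction for small $t$ (via (\ref{adapest}) with $T\mapsto t$) and continuing in finitely many steps, and finally invokes the open mapping theorem to get a bounded inverse. You instead observe that the same inequality the paper feeds into Gronwall is a Volterra-type bound, iterate it to get $\|(K^{\prime}(\Psi))^n\|\leq (AT)^n/n!$, and sum the Neumann series. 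This is cleaner in two concrete ways: it delivers injectivity, surjectivity, and boundedness of the inverse in a single stroke with the explicit bound $\|[I-K^{\prime}(\Psi)]^{-1}\|\leq e^{AT}$; and, as you note, it establishes invertibility of $I-K^{\prime}(v)$ uniformly over $v\in\overline{B(0,r)}$ rather than only at the fixed point, which is what Proposition \ref{VK} needs and which the paper otherwise recovers from the perturbation lemma of \cite{G}. One small caveat you should not gloss over: for the exchange-correlation term, Assumption \ref{3.1def} is stated only as a $C(J;H^1_0)$-level bound, and since $\Phi$ is a time history (its value at time $s$ depends on $\rho$ on $[0,s]$), the pointwise estimate should read $\|[(\partial\Phi/\partial\rho)\,\mathrm{Re}(\bar\Psi\omega)](s)\,\psi\|_{H^1_0}\leq C\,\|\Psi\|_{C([0,s];H^1_0)}\,\sup_{\tau\leq s}\|\omega(\tau)\|_{H^1_0}\,\|\psi\|_{H^1_0}$ rather than depending on $\omega(s)$ alone. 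This still preserves the Volterra structure and yields the same factorial decay after replacing $\|\omega(s)\|$ by $\sup_{\tau\leq s}\|\omega(\tau)\|$ in your intermediate inequality, but the statement of the iterated bound should be adjusted accordingly.
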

{\it Proof:}
We consider, in turn, the injective and surjective properties. By the open
mapping theorem, the inverse then exists and is a continuous linear
operator.
\begin{description}
\item[Injective Property]
\end{description}
We assume that there is  $\omega \in C(J; H^{1}_{0})$, 
such that
$$
S^{\prime}(\Psi)[\omega] = (I - K^{\prime}(\Psi))\omega = 0.
$$
We can apply Gronwall's inequality to the estimate,
\begin{equation}
\label{Gronpre}
\|\omega (\cdotp, t)\| \leq C \int_{0}^{t} \|\omega (\cdotp, s)\| \; ds,   
\end{equation}
where the norm is the $H^{1}_{0}$ norm, and $C$ is a fixed positive constant.
Gronwall's inequality yields that 
$\omega \equiv 0$.
The derivation of (\ref{Gronpre}) proceeds directly from 
Proposition \ref{Dif}.
\begin{description}
\item[Surjective Property]
\end{description}
We use a fixed point argument (even though the problem is linear).
Suppose $f$ is given in
$X = C(J; H^{1}_{0})$.
We consider the equation,
$$
S^{\prime}(\Psi)[\psi] = (I - K^{\prime}(\Psi))\psi = f,
$$
for $\psi \in X$. This is equivalent to a fixed point for
$$
L \psi = 
K^{\prime}(\Psi)\psi + f.
$$ 
$L$ is seen to be a strict contraction for $t$ sufficiently small by an
application of (\ref{adapest}) with $T \mapsto t$. 
By continuation in $t$, we obtain a fixed point $\psi$.
\subsection{Exact Newton iteration for the system}
We have obtained the following result.
\begin{theorem}
\label{NCT}
Under the hypotheses of Appendix A, 
the TDDFT model admits of Picard iteration
(successive approximation) 
for the mapping $K$, restricted to ${\overline {B(0, r)}}$,
 which is convergent to the solution 
$\Psi$ in $C(J; H^{1}_{0})$. 
If $\Phi$ is explicitly present in the potential $V_{\rm e}$, we also 
assume (the) Assumptions 
(\ref{3.1def}), and define $\kappa, \sigma$
by (\ref{definitionkappa}), (\ref{explicitsigma}), resp. Here, $S = I -
K$. If the starting iterate $u_{0}$ is chosen as stated in Corollary
\ref{exactcombined}, where $q = \gamma_{t} < 1$, then exact Newton iteration is
consistent and quadratically convergent  
in $C(J;H^{1}_{0})$. 
\end{theorem}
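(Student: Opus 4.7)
The plan is to apply Corollary \ref{exactcombined} with $P := K$ on the Banach space $X := C(J; H^1_0)$, restricted to the invariant closed ball $\overline{B(0,r)}$. The assertion reduces to verifying, in order, each of the hypotheses of that corollary, all of which have been prepared in the previous subsections.

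First, I would recall that the strict contraction property of $K$ on $\overline{B(0,r)}$, with contraction constant $q = \gamma_t < 1$ for $t$ sufficiently small, is exactly Theorem \ref{EU}, whose quantitative estimate on $\gamma_t$ is supplied by Lemma \ref{contractionconstant}. By the Banach fixed point theorem, Picard iteration $p_{k+1} = K p_k$ is convergent to the unique fixed point $\Psi$ in $C(J; H^1_0)$, and the standard successive-approximation tail estimate \eqref{SA} lets us meet any prescribed residual tolerance $\epsilon > 0$ by taking $k$ sufficiently large. This also handles the continuation-in-$t$ issue as in Theorem \ref{EU}, so we may assume without loss that we are working on a subinterval on which $\gamma_t < 1$.

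Second, Lipschitz continuous Fréchet differentiability of $K$ on $\overline{B(0,r)}$ is Proposition \ref{LF}; the Lipschitz constant $c$ of $K'$ is obtained by summing the bounds on $D_1, D_2, D_3$ in that proof, with $\beta$ equal to the constant $C_0$ of \eqref{constantC} (incremented by the constant from \eqref{conprod2} when $\Phi$ is present). Third, invertibility of $S'(\Psi) = I - K'(\Psi)$ at the fixed point is Proposition \ref{INV}, and the perturbation argument used at the start of the proof of Proposition \ref{VK} then propagates invertibility to a closed ball around $\Psi$ with uniform inverse norm $\kappa$ given by \eqref{definitionkappa}. With these three ingredients, Proposition \ref{VK} applies with $x_0 := \Psi$; defining $\sigma$ by \eqref{explicitsigma} for a chosen $0 < \alpha < 1$ and $h \leq 1/2$ gives the two auxiliary inequalities $c\kappa^2 \leq h\sigma$ and $\kappa/(h\sigma) \leq (1-\alpha)\delta$ required by the proof of Proposition \ref{VK}.

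Finally, I would select the starting iterate $u_0$ as a Picard iterate $p_n$ satisfying \eqref{SA} with $\epsilon := 1/(\sigma(1+q))$, as in Corollary \ref{exactcombined}; the short calculation in the proof of Proposition \ref{VK} then yields the consistency bound $\|S u_0\| \leq \sigma^{-1}$, and by increasing $n$ further if necessary one also ensures $u_0 \in B_{\alpha\delta}(\Psi)$, since $p_n \to \Psi$. Applying the core lemma of Appendix C (via Corollary \ref{exactcombined}) then gives the R-quadratic convergence estimate for the exact Newton iteration in $C(J; H^1_0)$. The only real point of care — not a mathematical obstacle so much as bookkeeping — is that $u_0$ must simultaneously satisfy the residual condition and lie in the Newton ball centered at the (unknown) fixed point $\Psi$; both are achieved simultaneously by the convergence of Picard iteration to $\Psi$, which is why it is essential that the existence/uniqueness theory of \cite{J1} was set up via strict contraction rather than by a compactness argument.
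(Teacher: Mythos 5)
Your proposal is correct and follows essentially the same route as the paper's proof: both reduce the theorem to Corollary \ref{exactcombined} by invoking Proposition \ref{LF} (Lipschitz continuous Fr\'{e}chet differentiability of $K$), Proposition \ref{INV} (invertibility of $I - K'(\Psi)$ at the fixed point), and the strict contraction of $K$ from Theorem \ref{EU} / Lemma \ref{contractionconstant}, with $u_0$ supplied by Picard iteration. You have merely spelled out the bookkeeping (the choice of $\kappa$, $\sigma$, the simultaneous satisfaction of the residual bound and the $B_{\alpha\delta}$ membership via $p_n\to\Psi$) that the paper compresses into a citation of the earlier propositions.
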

\begin{proof}
We have demonstrated the Lipschitz continuity of $K^{\prime}$ in
Proposition \ref{LF} and the invertibility of $I - K^{\prime}$ at the
solution $\Psi$ in Proposition \ref{INV}. 
Moreover, the Lipschitz constant 
can be estimated explicitly by the estimations of $D_{1}, D_{2}, D_{3}$ in
the proof of Proposition \ref{LF}. 
The result follows from Corollary
\ref{exactcombined}. 
\end{proof}
\begin{remark}
If 
$[I - K^{\prime}(\Psi)]^{-1}$ has a convergent Neumann series, then
$\kappa$ can be estimated explicitly. This case is discussed in the
following section. 
\end{remark}
\section{Approximate Newton Methods}
In this
section, we consider the reduction of the complexity 
of Newton iteration to
that of the approximation of the evolution operator itself. Given the
framework with which we have studied the TDDFT model, 
this is the ultimate reduction possible.
In order to do this, it will be necessary to interpret the mappings
$G_{z}$ of the Lemma of Appendix \ref{appendixC} 
as approximate right inverses of the
derivative of $S$, as used in that lemma. This, in turn, is accomplished
by the truncated Neumann series, which is becoming increasingly
relevant in the study of the Schr\"{o}dinger equation \cite{MF}. 
\subsection{Properties of an approximate Newton method}
\begin{definition}
\label{prec}
The operators $S$ and $\{G_{z}\}$ of the
Lemma of Appendix \ref{appendixC} 
are said to satisfy the conditions of an
approximate Newton method if, on the domain $B_{\delta}$, 
there is $M$ such that: 
\begin{enumerate}
\item
$S^{\prime}$ is Lipschitz continuous with constant $2M$.
\item
$G_{z}$ is uniformly bounded in norm by $M$.
\item
$G_{z}$ satisfies the following approximation of the identity condition: 
For each $z \in B_{\delta}$, the operator $G_{z}$ satisfies the operator 
inequality,
$$
\|I - S^{\prime}(z) G_{z} \| \leq M \|S(z)\|.
$$
It is understood that, if $S(z) = 0$, then $G_{z} = [S^{\prime}(z)]^{-1}$.
\end{enumerate}
\end{definition}
The following proposition is a consequence of \cite[Theorem 2.3]{J3}.
\begin{proposition}
\label{quadapp}
Suppose that the conditions of
Definition \ref{prec} are satisfied 
for a given closed ball $B_{\delta}$
with $0 < \alpha < 1$ selected. 
Given $h \leq 1/2$, 
choose $\sigma$, 
$$
\sigma = \frac{2(M + M^{3})}{h}\max\left(1, \; \frac{1}{(1 - \alpha)
\delta}\right),
$$ 
and choose $\kappa = M$.
If
$$
\|Su_{0}\| \leq \sigma^{-1},
$$
then the approximate Newton sequence satisfies
the inequalities (\ref{Kanone}, \ref{Kantwo}), and
is quadratically convergent, as specified in 
(\ref{R-quadratic}).
\end{proposition}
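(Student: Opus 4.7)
The plan is to verify the two hypotheses of the Appendix~\ref{appendixC} Lemma under the three conditions of Definition~\ref{prec}, with the prescribed choices $\kappa = M$ and $\sigma$. Condition~(\ref{Kanone}), the uniform bound $\|G_{z}\|\leq \kappa$, is immediate from Condition~2 of Definition~\ref{prec}. The consistency bound $\|Su_{0}\|\leq \sigma^{-1}$ is a standing hypothesis. Hence the real content is establishing (\ref{Kantwo}) — quadratic contraction of the residuals — while simultaneously maintaining $u_{k}\in B_{\delta}$ for all $k$.

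For (\ref{Kantwo}), I would apply the fundamental theorem of calculus for Fréchet derivatives (as already used in the proof of Proposition~\ref{VK}) to the approximate Newton step $u_{k} = u_{k-1} - G_{u_{k-1}} S(u_{k-1})$. This yields the algebraic identity
\[
S(u_{k}) = [I - S'(u_{k-1}) G_{u_{k-1}}] S(u_{k-1}) + \int_{0}^{1}\bigl[S'(u_{k-1}+t(u_{k}-u_{k-1})) - S'(u_{k-1})\bigr](u_{k}-u_{k-1})\, dt,
\]
splitting the residual into an approximation-defect piece, bounded by $M\|S(u_{k-1})\|^{2}$ via Condition~3, and a Lipschitz-remainder piece, bounded by $M^{3}\|S(u_{k-1})\|^{2}$ via Condition~1 in tandem with Condition~2 and the step bound $\|u_{k}-u_{k-1}\|\leq M\|S(u_{k-1})\|$. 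Summing gives $\|S(u_{k})\|\leq (M+M^{3})\|S(u_{k-1})\|^{2}$, and the first factor in the max defining $\sigma$ is arranged precisely so that $(M+M^{3})\leq h\sigma/2$, which is (\ref{Kantwo}).

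The main obstacle, as I see it, is invariance: the identity above presupposes $u_{k-1}\in B_{\delta}$, where Conditions~1--3 are available. Starting from $u_{0}\in B_{\alpha\delta}$, one iterates $h\leq 1/2$ against (\ref{Kantwo}) to obtain geometric decay $\|S(u_{k})\|\leq 2^{-k}\sigma^{-1}$, and then telescopes $\|u_{k}-u_{0}\|\leq M\sum_{j\geq 0}\|S(u_{j})\|$ via this geometric series. The second factor in the max defining $\sigma$ — namely $2(M+M^{3})/[h(1-\alpha)\delta]$ — is chosen exactly so that the accumulated displacement stays within $(1-\alpha)\delta$, keeping the entire sequence inside $B_{\delta}$. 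Once both (\ref{Kanone}) and (\ref{Kantwo}) are in force, (\ref{R-quadratic}) follows directly from the Appendix~\ref{appendixC} Lemma.
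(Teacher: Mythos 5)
Your proof is correct and, in fact, supplies the actual mathematical content that the paper delegates entirely to an external source: the paper's ``proof'' of this proposition is the single sentence that it ``is a consequence of [J3, Theorem~2.3],'' with no details given. You instead verify the hypotheses of the Appendix~\ref{appendixC} Lemma directly from the three conditions of Definition~\ref{prec}, which is a legitimate and self-contained alternative.

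The technical core of your argument is the identity
\[
S(u_{k}) = \bigl[I - S'(u_{k-1}) G_{u_{k-1}}\bigr] S(u_{k-1}) + \int_{0}^{1}\bigl[S'(u_{k-1}+t(u_{k}-u_{k-1})) - S'(u_{k-1})\bigr](u_{k}-u_{k-1})\, dt,
\]
which splits the new residual into a defect term controlled by Condition~3 (bound $M\|S(u_{k-1})\|^{2}$) and a Taylor remainder controlled by Conditions~1 and~2 (bound $M\cdot M^{2}\|S(u_{k-1})\|^{2}$). Their sum $(M+M^{3})\|S(u_{k-1})\|^{2}$ is what makes the first branch of the $\max$ defining $\sigma$ the right quantity to force (\ref{Kantwo}). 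Your handling of invariance via the telescoping sum and geometric decay $\|S(u_{k})\|\leq 2^{-k}\sigma^{-1}$ is also sound, and it is consistent with the side condition $\kappa\leq(1-\alpha)\delta/t^{*}$ in the Appendix~\ref{appendixC} Lemma, since $t^{*}\leq 2/\sigma$ for $h\leq 1/2$ and your choice of $\sigma$ then yields $Mt^{*}\leq 2M/\sigma\leq(1-\alpha)\delta$. In short, where the paper points to an off-page theorem, you reconstruct the verification from the lemma the paper does reproduce in Appendix~\ref{appendixC}; the trade is length for transparency, and your version makes visible exactly how the factor $M+M^{3}$ arises.
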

\subsection{An approximate Newton method based on residual estimation}
Definition \ref{prec} of the preceding section reveals the 
properties required of an approximate
Newton method in order to maintain classical quadratic convergence.
In this section, we introduce such a method associated with TDDFT.
It has the desired effect of
simplifying the formation of the algorithm in the case when the Neumann
series exists. We remark that we have established a  
general result in
Appendix C, which applies to a 
(closed) neighborhood $B_{\delta}$ of the fixed point. 

We require a preliminary lemma, which establishes a condition under which
the Neumann series exists. Mathematically, this involves the possible
restrictions of $H^{1}_{0}$-valued
functions to $[0, T^{\prime}], T^{\prime} \leq T$. 
For convenience, we will retain the notation, $B_{\delta}$, used in
Appendix C.
\begin{lemma}
\label{Neumannlocal}
There is a terminal
time $T = T^{\prime}$ such that 
\begin{equation*}
\|K^{\prime}(\Psi)\| < 1,
\end{equation*}
uniformly in $\Psi \in B_{\delta}$.
In fact, if the constant 
$C_{0}$, 
appearing in (\ref{adapest}), 
is chosen as discussed in 
Lemma \ref{contractionconstant}, 
and $\|U^{\rho}\|_{\infty, H^{1}_{0}}$ is bounded 
as in Lemma \ref{contractionconstant}, then $T^{\prime}$ may be chosen
to make the (resulting) rhs of  
(\ref{adapest}) less than one.
\end{lemma}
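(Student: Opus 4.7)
The plan is to invoke the operator norm estimate (\ref{adapest}) established in Proposition \ref{Dif} and exhibit $T'$ as the time at which the right-hand side drops below one. The rhs of (\ref{adapest}) has the form
\[
\frac{2 C_{0} T}{\hbar}\, \|U^{\rho}\|_{\infty,H^{1}_{0}}^{2}\,\|\Psi_{0}\|_{H^{1}_{0}},
\]
and I would argue, factor by factor, that this can be made smaller than one by decreasing $T$, uniformly over $\Psi \in B_{\delta}$.

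First I would verify that $C_{0}$ is independent of $T$ and uniform over $\Psi \in B_{\delta}$. Inspecting the expression (\ref{constantC}) (and the increment from (\ref{conprod2}) in Assumptions \ref{3.1def} when $\Phi$ is present), $C_{0}$ depends only on $\Omega$, the Sobolev embedding constants $E_{1}$, the Hartree kernel $W$, and the radius $r$ from (\ref{definitionr}). Since $B_{\delta}$ is a closed neighborhood of the fixed point $\Psi \in \overline{B(0,r)}$, I may shrink $\delta$ so that $B_{\delta} \subset \overline{B(0,r)}$, after which $C_{0}$ is a fixed constant for all $\Psi \in B_{\delta}$. Second, I would use (\ref{estEVY}) from Lemma \ref{contractionconstant} to control $\|U^{\rho}\|_{\infty,H^{1}_{0}}$: its bound is $\exp(T \cdot M_{1})$ for some constant $M_{1}$ depending on $r$, $\|\nabla W\|_{L^{1}}$, $\|\nabla V\|_{C(J;L^{3})}$, and (if applicable) the uniform estimate for $\|\nabla \Phi\|_{L^{3}}$, but not on $T$ or on the specific $\Psi \in B_{\delta}$.

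Combining these, the rhs of (\ref{adapest}) is bounded above by $(2C_{0}\|\Psi_{0}\|_{H^{1}_{0}}/\hbar)\,T\,\exp(2 T M_{1})$, a continuous function of $T$ that vanishes at $T = 0$. By continuity, there exists $T' > 0$ small enough that this expression is strictly less than $1$, and for every $\Psi \in B_{\delta}$ the estimate (\ref{adapest}) then yields $\|K^{\prime}(\Psi)\| < 1$ uniformly on $[0,T']$. Setting $T = T'$ completes the argument.

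The only obstacle worth flagging is a bookkeeping one: ensuring that the constants in Lemma \ref{contractionconstant} are truly uniform over $B_{\delta}$ rather than only over a single $\Psi$. This is handled, as indicated above, by shrinking $\delta$ so that $B_{\delta} \subset \overline{B(0,r)}$, or equivalently by enlarging $r$ in (\ref{definitionr}) to absorb $\delta$; either adjustment leaves $C_{0}$ and $M_{1}$ finite and $T$-independent, so the smallness argument above still applies.
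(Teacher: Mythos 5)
Your argument is correct and follows essentially the same route as the paper, which simply appeals to the estimate (\ref{adapest}) together with the explicit bounds in Lemma \ref{contractionconstant}; you have merely spelled out the factor-by-factor reasoning (the $T$-independence of $C_{0}$, the exponential bound on $\|U^{\rho}\|_{\infty,H^{1}_{0}}$, and the vanishing of the product as $T\to 0$) that the paper treats as immediate.
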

\begin{proof}
This is immediate from the estimate (\ref{adapest}) 
and the 
formulation of Lemma 
\ref{contractionconstant}. 
\end{proof}
\begin{definition}
\label{Neumannapp}
By the truncated Neumann series approximation to $(I -
K^{\prime}(\Psi))^{-1}$ is meant the expression,
\begin{equation}
\label{Neumannappeq}
G_{\Psi}\omega = [I + \sum_{k=1}^{n} (K^{\prime}(\Psi))^{k}](\omega),  
\end{equation} 
where $K^{\prime}(\Psi)[\omega]$ has been defined in (\ref{defKprime}).
The choice of $n$ may depend on $\Psi$. We will assume that
\begin{equation}
\label{Neumannsatisfied}
\|K^{\prime}(\Psi)\| \leq K_{0} < 1,
\end{equation}
uniformly in $\Psi$,
whenever this approximation is used.
\end{definition}
This will play the role of the approximate inverse. 
We consider  
the magnitude of the lhs of condition 3 of Definition \ref{prec}. 
\begin{lemma}
\label{AID}
The identity, 
$$
I - (I - K^{\prime}(\Psi))
(I + \sum_{k=1}^{n} (K^{\prime}(\Psi))^{k}) = 
(K^{\prime}(\Psi))^{n+1}, 
$$
holds. 
The term 
$
\|K^{\prime}(\Psi)\|
$
is estimated, as in  
(\ref{adapest}), 
by a positive constant times 
$T^{\prime}$.
\end{lemma}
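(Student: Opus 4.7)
The plan is straightforward since the lemma breaks cleanly into two independent assertions, each requiring little more than careful bookkeeping, and neither requires convergence of the full Neumann series.

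For the algebraic identity, I would abbreviate $A := K^{\prime}(\Psi)$ and compute the product $(I - A)\bigl(I + \sum_{k=1}^n A^k\bigr) = (I - A)\sum_{k=0}^n A^k$. Distributing gives $\sum_{k=0}^n A^k - \sum_{k=0}^n A^{k+1}$, which telescopes to $I - A^{n+1}$. Subtracting from $I$ on the left-hand side of the claimed identity then yields $A^{n+1}$, which is the first assertion. This manipulation is purely formal and valid for any bounded linear operator on $C(J;H^{1}_{0})$; no hypothesis on $\|A\|$ is needed at this stage.

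For the operator norm estimate, I would invoke (\ref{adapest}) directly, replacing the interval endpoint $T$ by $T^{\prime}$. That inequality reads $\|K^{\prime}(\Psi)\| \leq (2 C_{0} T/\hbar)\|U^{\rho}\|^{2}_{\infty,H^{1}_{0}}\|\Psi_{0}\|_{H^{1}_{0}}$, in which the factor of $T$ appears linearly while the other quantities are fixed: $C_{0}$ and $\|\Psi_{0}\|_{H^{1}_{0}}$ are data, $\hbar$ is constant, and $\|U^{\rho}\|_{\infty,H^{1}_{0}}$ admits the exponential bound (\ref{estEVY}) whose argument is controlled on $[0,T^{\prime}]$ because $T^{\prime} \leq T$. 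Consequently the bound assumes the form $\|K^{\prime}(\Psi)\| \leq K_{1} T^{\prime}$, where $K_{1}$ is a positive constant assembled from the quantities catalogued in Lemma \ref{contractionconstant}.

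The only point warranting a sentence of explanation is that $K_{1}$ should be independent of $\Psi \in B_{\delta}$; I would handle this by observing that $C_{0}$ is defined in (\ref{constantC}) in terms of $r$ alone, and that the exponential in (\ref{estEVY}) also depends on $\Psi$ only through $r$, so both factors admit uniform bounds on $B_{\delta}$. This uniformity is precisely what Lemma \ref{Neumannlocal} already invoked in asserting (\ref{Neumannsatisfied}) uniformly in $\Psi$, so the same reasoning transfers verbatim. I do not anticipate a genuine obstacle in either part.
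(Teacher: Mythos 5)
Your proof is correct and takes essentially the same approach as the paper: the paper dismisses the identity as ``routine'' (the telescoping you carry out explicitly) and then, exactly as you do, invokes (\ref{adapest}) with $T \mapsto T^{\prime}$ to obtain the linear-in-$T^{\prime}$ bound on $\|K^{\prime}(\Psi)\|$. Your added remarks on uniformity in $\Psi$ over $B_{\delta}$ are a harmless elaboration consistent with Lemma~\ref{contractionconstant} and Lemma~\ref{Neumannlocal}.
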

\begin{proof}
The identity is routine. 
The estimation of
$
\|K^{\prime}(\Psi)\|
$
is given by the inequality (\ref{adapest}) with $T \mapsto T^{\prime}$. 
The conclusion follows.
\end{proof}
\begin{proposition}[Choice of constants in Definition \ref{prec}]
\label{adaptivepre}
For an appropriate terminal time, $T = T^{\prime}$,
chosen so that (\ref{Neumannsatisfied}) holds,
we define
$$
M = \max \left(
\frac{1}{1- K_{0}}, \frac{c}{2} \right).
$$
Here, $c$ is the Lipschitz constant of $K^{\prime}$, estimated in
Proposition \ref{LF}.
Then conditions one and two of Definition \ref{prec} hold.
The third condition holds if, for $\Psi$ not the fixed point of $K$,  
we choose $n$ to be the smallest positive integer such that
\begin{equation}
\label{choiceofN}
\|K^{\prime}(\Psi)\|_{C(J^{\prime}; H^{1}_{0})}^{n}  \leq 
\|(I - K)\Psi\|_{C(J^{\prime}; H^{1}_{0})}. 
\end{equation}
\end{proposition}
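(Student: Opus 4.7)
The plan is to verify the three numbered conditions of Definition \ref{prec} separately, matching the two arguments of the maximum defining $M$ to two of the conditions, and using Lemma \ref{AID} for the third. The choice of $n$ in (\ref{choiceofN}) then completes the argument for the residual-based bound.

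For condition one, I observe that $S'(\Psi) = I - K'(\Psi)$, so the Lipschitz constant of $S'$ coincides with that of $K'$ (the identity cancels in the difference). By Proposition \ref{LF}, this Lipschitz constant is $c$. Since $M$ is chosen with $M \geq c/2$, we have $2M \geq c$, which is exactly condition one. For condition two, I estimate the truncated Neumann series (\ref{Neumannappeq}) by submultiplicativity and (\ref{Neumannsatisfied}):
$$\|G_\Psi\| \;\leq\; \sum_{k=0}^{n} \|K'(\Psi)\|^{k} \;\leq\; \sum_{k=0}^{\infty} K_0^{k} \;=\; \frac{1}{1-K_0} \;\leq\; M.$$

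For condition three, Lemma \ref{AID} identifies $I - S'(\Psi)G_\Psi$ with $(K'(\Psi))^{n+1}$. I factor the norm as $\|K'(\Psi)\| \cdot \|K'(\Psi)\|^{n}$. Because $K_0 < 1$ forces $1/(1 - K_0) > 1 > K_0$, we obtain $M > K_0 \geq \|K'(\Psi)\|$, so the first factor is bounded by $M$. The defining property (\ref{choiceofN}) of $n$ bounds the second factor by $\|(I - K)\Psi\| = \|S(\Psi)\|$. Multiplying these yields $\|I - S'(\Psi)G_\Psi\| \leq M\|S(\Psi)\|$, which is condition three. The degenerate case $S(\Psi) = 0$ at the fixed point is accommodated by the convention in Definition \ref{prec} that $G_\Psi = [S'(\Psi)]^{-1}$; this inverse is realized as the full (infinite) Neumann series, which converges thanks to (\ref{Neumannsatisfied}).

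The only non-bookkeeping step is verifying that some finite $n$ satisfying (\ref{choiceofN}) actually exists when $\Psi$ is not the fixed point. This is immediate: $\|K'(\Psi)\|^{n} \to 0$ because $\|K'(\Psi)\| \leq K_0 < 1$, whereas $\|(I - K)\Psi\|$ is strictly positive off the fixed point, so the smallest admissible $n$ is well-defined. I do not anticipate any substantive obstacle; the whole proof amounts to the three short estimates above, organized so that the two branches of the maximum defining $M$ are calibrated precisely to conditions one and two.
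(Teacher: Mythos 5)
Your proof is correct and follows essentially the same route as the paper: verify conditions one and two by the calibration of the two branches of the maximum defining $M$, then combine Lemma \ref{AID} with (\ref{choiceofN}) and the bound $K_{0}\leq M$ to obtain condition three. Your additional remark on the existence of a finite admissible $n$ when $\Psi$ is off the fixed point is a small but welcome explication that the paper leaves implicit.
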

\begin{proof}
As previously remarked, $M$ is a bound for the norm of the approximate
inverse, and $2M$ is a bound for the Lipschitz constant of $S^{\prime}$.
It follows directly that conditions one and two hold. 
If we combine the identity in Lemma \ref{AID} with the definition of $n$
in (\ref{choiceofN}), we obtain
$$
\|I - (I - K^{\prime}(\Psi)) G_{\Psi}\| 
_{C(J^{\prime}; H^{1}_{0})} 
\leq K_{0} \|(I - K)\Psi\|
_{C(J^{\prime}; H^{1}_{0})}. 
$$ 
Since $K_{0} \leq M$, we conclude that condition 3 holds.
\end{proof}
The following theorem is a direct consequence of the discussion of this
section, 
specifically Propositions \ref{quadapp} and
\ref{adaptivepre}.
\begin{theorem}
Consider a time interval $J^{\prime} = [0, T^{\prime}]$, 
where $T^{\prime}$ has been
selected so that (\ref{Neumannsatisfied}) holds and, further, so that 
the contraction constant $\gamma_{T^{\prime}}$, associated with $K$, is less
than one on the closed ball $B_{r} = {\overline {B(0, r)}} \subset
C(J^{\prime}; H^{1}_{0})$. 
Suppose that a family $\{G_{\Psi}\}$ 
of approximate inverses is selected as in
Definition \ref{adaptivepre}, with $n = n(\Psi)$ chosen according to
(\ref{choiceofN}). Suppose that $M$ is defined as in Proposition 
\ref{adaptivepre}
and $\sigma$ as in 
Proposition \ref{quadapp}, where we identify $\delta$ with $r$.
If $u_{0}$ is selected by
Picard iteration (successive approximation) to satisfy 
$\|(I - K)u_{0}\|_{C(J^{\prime}; H^{1}_{0})} \leq \sigma^{-1}$, then
approximate Newton iteration is quadratically convergent. Specifically,
the sequence, for $k \geq 1$, and $n = n(k)$,
$$
u_{k} = u_{k-1} - 
(I + \sum_{j=1}^{n} (K^{\prime}(u_{k-1}))^{j}) (I - K)(u_{k-1}), 
$$
converges quadratically to the unique solution of the system.
\end{theorem}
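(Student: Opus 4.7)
The plan is to assemble the final theorem directly from Proposition \ref{quadapp} and Proposition \ref{adaptivepre}, treating $S = I - K$ on the closed ball $B_r \subset C(J^{\prime}; H^{1}_{0})$ with the family $\{G_{\Psi}\}$ from Definition \ref{Neumannapp}. Since $T^{\prime}$ has been chosen both to ensure $\gamma_{T^{\prime}} < 1$ (so that $K$ is a strict contraction with unique fixed point $\Psi \in B_{r}$) and to ensure the uniform bound (\ref{Neumannsatisfied}), the truncated Neumann series $G_{\Psi}$ is well-defined on all of $B_{r}$, and the problem reduces to checking the three structural conditions of Definition \ref{prec} on $B_{\delta} := B_{r}$ with the constant $M$ from Proposition \ref{adaptivepre}.

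The first step is to invoke Proposition \ref{adaptivepre} directly: its conclusions give that (i) $G_{\Psi}$ has operator norm bounded uniformly by $1/(1-K_{0}) \le M$ via the geometric series estimate for truncated Neumann sums, (ii) $S^{\prime} = I - K^{\prime}$ is Lipschitz with constant at most $c \le 2M$ by Proposition \ref{LF} and the definition of $M$, and (iii) the adaptive choice (\ref{choiceofN}) together with the algebraic identity of Lemma \ref{AID} yields
\[
\|I - S^{\prime}(\Psi) G_{\Psi}\| = \|(K^{\prime}(\Psi))^{n+1}\| \le K_{0}\,\|S(\Psi)\| \le M\,\|S(\Psi)\|,
\]
which is exactly the approximation-of-identity condition. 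In the special case $S(\Psi)=0$ at the fixed point, the defining inequality forces $n \to \infty$, so $G_{\Psi}$ collapses to the full Neumann inverse $[S^{\prime}(\Psi)]^{-1}$, consistent with Definition \ref{prec}.

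The second step is to establish the initial residual condition $\|S u_{0}\| \le \sigma^{-1}$ required to trigger Proposition \ref{quadapp}. Because $K$ is a strict contraction on $B_{r}$ with constant $\gamma_{T^{\prime}} < 1$, the Picard iterates $p_{n} = K p_{n-1}$ remain in $B_{r}$ and converge geometrically to $\Psi$. The elementary bound
\[
\|S p_{n}\| = \|K p_{n} - p_{n}\| \le \|K p_{n} - K \Psi\| + \|\Psi - p_{n}\| \le (1 + \gamma_{T^{\prime}}) \|p_{n} - \Psi\|
\]
shows that $\|S p_{n}\|$ can be made smaller than the prescribed threshold $\sigma^{-1}$ by taking $n$ large enough, exactly as in the proof of Proposition \ref{VK}. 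Setting $u_{0} := p_{n}$ for such an $n$ places $u_{0}$ in $B_{\alpha r}$ (after, if necessary, further shrinking to match the parameter $\alpha$) and meets the consistency hypothesis.

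With these verifications in hand, Proposition \ref{quadapp} applies verbatim, giving that the abstract iterates $u_{k} = u_{k-1} - G_{u_{k-1}} S(u_{k-1})$ satisfy the conditions (\ref{Kanone}) and (\ref{Kantwo}) of the Appendix C lemma and hence converge R-quadratically as in (\ref{R-quadratic}) to the unique fixed point $\Psi$. Substituting the explicit form of $G_{u_{k-1}}$ from (\ref{Neumannappeq}) and $S = I - K$ recovers the displayed recurrence in the theorem statement. The only delicate point is the adaptive choice of $n = n(\Psi)$, since the stopping criterion (\ref{choiceofN}) refers to $\|(I-K)\Psi\|$, which vanishes at the fixed point and forces $n(\Psi) \to \infty$ there; however, this is exactly the behavior needed for the approximate inverses to recover the true inverse in the limit, and so it aligns naturally with the convergence guarantee rather than obstructing it.
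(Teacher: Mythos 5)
Your proposal is correct and follows essentially the same route as the paper: the paper's own proof is a one-sentence appeal to Propositions \ref{quadapp} and \ref{adaptivepre}, and you simply spell out the details of that appeal—verifying the three conditions of Definition \ref{prec} via Proposition \ref{adaptivepre} and Lemma \ref{AID}, obtaining the initial residual bound by the Picard-iteration argument already used in Proposition \ref{VK}, and then invoking Proposition \ref{quadapp}. The only minor imprecision is your remark that at the fixed point the criterion (\ref{choiceofN}) ``forces $n\to\infty$''; the paper handles this case by fiat in Definition \ref{prec} (setting $G_{z}=[S^{\prime}(z)]^{-1}$ when $S(z)=0$), which is the cleaner reading, though your interpretation is consistent with it.
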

\section{Conclusion}
The current article 
suggests successive approximation, proved in
\cite{J1}, combined with operator Newton iteration. 
The underlying operator and convergence  
theory for this analysis were demonstrated in \cite{J2,J3}.

Our results establish the classical Newton iteration theory, with
quadratic convergence, under hypotheses which naturally extend those 
used to prove existence and uniqueness. Moreover, our 
theory also permits an approximate inverse for the derivative mapping, and
this is implemented via the truncated Neumann series. 
The norm condition, for the use of the truncated Neumann series as
approximate inverse, appears severe,
since it represents an implicit restriction on the
length of the time interval. However, a similar restriction  
appears in the existence/uniqueness theory of \cite{J1}; 
continuation in a finite number of time steps is possible to obtain a
global solution. It is thus natural to employ the exact or approximate Newton
method in combination with Picard iteration (successive approximation). 

Although this intermediate study is quite far from the algorithms of 
scientific computation,
it is compatible with numerical methods as illustrated in
\cite{CP} and implemented in \cite{JP}. It is also compatible with the 
numerical methods employed in the scientific literature cited here. 
Newton methods have been applied to quantum models previously; see \cite{CBDW} 
for a control theory application to 
the quantum Liouville-von Neumann master equation. For TDDFT, however, 
we believe that we have laid the foundation for future refinements defined 
by the methods of approximation theory. 

\appendix
\section{Hypotheses for the Hamiltonian}

We make the following assumptions for the existence/uniqueness theory
described in Theorem \ref{EU}.
\begin{itemize}
\item
$\Phi$ is assumed nonnegative
for each $t$, and continuous in $t$ on 
$H^{1}$ and bounded in $t$ into $W^{1,3}$. 
The continuity on $H^{1}$ is consistent with the zero-force law as
defined in \cite[Eq.\thinspace (6.9)]{U}:
$$
\int_{\Omega} \rho({\bf x},t) \nabla \Phi({\bf x}, t, \rho) = 0.
$$
\item
The derivative $\partial \Phi/\partial t = 
\phi$ is assumed measurable, and bounded in its arguments. 
\item
Furthermore, the following smoothing condition
is assumed, 
expressed in a (uniform) Lipschitz norm condition:
$\forall t \in [0,T]$,
\begin{equation}
\|[\Phi(t, |\Psi_{1}|^{2}) - \Phi(t, |\Psi_{2}|^{2})]\psi\|_{H^{1}}
\leq 
C \|\Psi_{1} - \Psi_{2}\|_{H^{1}} \|\psi\|_{H^{1}_{0}}. 
\label{ecfollowsH}
\end{equation}
Here, $\psi$ is arbitrary in $H^{1}_{0}$.
\item
The so-called external potential $V$ is assumed to be nonnegative and 
continuously
differentiable on the closure of the space-time domain. 
\end{itemize}
We remark that the hypotheses of nonnegativity for $V$ and $\Phi$ are for
convenience only.
\section{Notation and Norms}
We employ complex Hilbert spaces 
in this article. 
$$
L^{2}(\Omega) = \{f = (f_{1}, \dots, f_{N})^{T}: |f_{j}|^{2} \;
\mbox{is integrable on} \; \Omega \}.
$$
Here, $f: \Omega \mapsto {\mathbb C}^{N}$ and 
$
|f|^{2} = \sum_{j=1}^{N} f_{j} {\overline {f_{j}}}.
$
The inner product on $L^{2}$ is
$$
(f,g)_{L^{2}}=\sum_{j=1}^{N}\int_{\Omega}f_{j}(x){\overline {g_{j}(x)}} \; dx.
$$
However, $\int_{\Omega} fg$ is interpreted as 
$$
\sum_{j=1}^{N} \int_{\Omega} f_{j} g_{j} \;dx.
$$
For $f \in L^{2}$, as just defined, if each component $f_{j}$ satisfies
$
f_{j} \in H^{1}_{0}(\Omega; {\mathbb C}), 
$
we write $f \in H^{1}_{0}(\Omega; {\mathbb C}^{N})$, or simply, 
$f \in H^{1}_{0}(\Omega)$.
The inner product in $H^{1}_{0}$ is 
$$
(f,g)_{H^{1}_{0}}=
(f,g)_{L^{2}}+\sum_{j=1}^{N}\int_{\Omega} 
\nabla f_{j}(x) \cdotp {\overline {\nabla g_{j}(x)}} \; dx.
$$
$\int_{\Omega} \nabla f \cdotp \nabla g$ is interpreted as
$$
\sum_{j=1}^{N}\int_{\Omega} 
\nabla f_{j}(x) \cdotp \nabla g_{j}(x) \; dx.
$$
Finally, $H^{-1}$ is defined as the dual of $H^{1}_{0}$, and its
properties are discussed at length in \cite{Adams}. 
The Banach space $C(J; H^{1}_{0})$ is defined in the traditional manner:
$$
C(J; H^{1}_{0}) = \{u:J \mapsto H^{1}_{0}: u(\cdotp) \mbox{is
continuous}\}, \;
\|u\|_{C(J; H^{1}_{0}} = \sup_{t \in J} \|u(t)\|_{H^{1}_{0}}.
$$
\section{A General Quadratic Convergence Result in Banach Space}
\label{appendixC}
We cite a weaker version, with different notation,
of a result proved in \cite[Lemma 2.2]{J3}.
Additional references to the earlier literature can be found there.
{\bf Lemma}. {\it
Suppose that $B_{\delta}:= \overline {B(x_{0}, \delta)}$ 
 is a closed ball in a Banach space $X$ and
$S$ is a continuous mapping from $B_{\delta}$ to a Banach space $Z$. 
Suppose that 
$u_{0} \in B_{\alpha \delta}$ where $0 < \alpha < 1$, and 
$\|S(u_{0})\| \leq \sigma^{-1}$. Suppose that a family 
$\{G_{z}\}$ of bounded linear  
operators is given, for each $z$ in the range of $S$, 
where $G_{z}: Z \mapsto X$. 
Define the iterates,
$$
u_{k} - u_{k-1} = -G_{u_{k-1}}S(u_{k-1}), \; k = 1,2, \dots
$$ 
Let $h$ be chosen so that $h \leq 1/2$, and set
$$
t^{\ast} =(h \sigma)^{-1} (1 - \sqrt{1 - 2h}). 
$$
The procedure is consistent, i.\thinspace e.\thinspace, $\{u_{k}\} \subset
B_{\delta}$, if the inequalities,
\begin{eqnarray}
\|u_{k}-u_{k-1}\|& \leq & \kappa
\|{S}(u_{k-1})\|, \;\; k \geq 1, \label{Kanone} \\
\|{S}(u_{k})\| & \leq & 
\frac{h\sigma}{2}\|{S}(u_{k-1})\|^{2},
\;\; k \geq 1, \label{Kantwo}
\end{eqnarray}
hold for some $\kappa \leq (1 - \alpha)\delta/t^{\ast}$.  
Moreover, the sequence converges to a root $u$ with 
the error estimate,
\begin{equation}
\|u - u_{k}\| \leq \frac{\kappa}{h \sigma}
\frac{(1-\sqrt{1-2h})^{2^{k}}}{2^{k}}.
\label{R-quadratic}
\end{equation}
}


\begin{thebibliography}{100}
\bibitem{RG}  E.~Runge and E.K.U.~Gross,
Density functional theory for time dependent systems. 
Phys. Rev. Lett. {\bf 52} (1984), 997--1000.

\bibitem{U} C.A.~Ullrich, 
{\em Time-Dependent Density-Functional Theory: Concepts and Applications}.
Oxford University Press, 2012.

\bibitem{KS} W.~Kohn and L.J.~Sham,
Self-consistent equations including exchange and correlation effects.
Phys. Rev. {\bf 140} (1965), A1133--A1138.

\bibitem{KV} W.~Kohn and P.~Vashista, 
General density functional theory. In, 
\emph{Theory of the Inhomogeneous
Electron Gas}. Ch. 2, Plenum Press, New York, 1983. 

\bibitem{JP} 
J.W.~Jerome and E.~Polizzi, Discretization of time dependent quantum 
systems: Real time
propagation of the evolution operators. Appl. Anal. {\bf 93} (2014), 
2574--2597.

\bibitem{J1} J.W.~Jerome, Time-dependent closed quantum systems: Nonlinear
Kohn-Sham potential operators and weak solutions, J. Math. Anal. Appl. {\bf 429}
(2015), 995--1006.  

\bibitem{CMR}
A.~Castro, M.A.L.~Marques, and A.~Rubio, Propagators for the
time-dependent Kohn-Sham equations. J. Chem. Phys. {\bf 121} (2004),
3425--3433.
 
\bibitem{tddft} A.~Castro and M.A.L.~Marques, 
 Time dependent density functional theory, In, Lec.
Notes in Phys. {\bf 706} (2006), 197--210.

\bibitem{YB} K.~Yabana,  T.~Nakatsukasa,  J.-I.~Iwata, and G.F.~Bertsch,
Real-time, real-space implementation of the 
linear response time-dependent density-functional theory.
Phys. Stat. Sol. (b) {\bf 243} No. 5 (2006), 1121--1138. 

\bibitem{LHP}
L.~Lehtovaara, V.~Havu, and M.~Puska, All-electron time-dependent
density functional theory with finite elements: Time-propagation approach.
J. Chem. Phys. {\bf 135} (2011), 154104.

\bibitem{MG}
M.A.L.~Marques and E.K.U.~Gross,
Time dependent density functional theory.
Annu. Rev. Phys. Chem. (2004), 427--455. 

\bibitem{CH}
T.~Cazenave and A.~Haraux, {\it An Introduction to Semilinear Evolution
Equations}. Revised Ed., Oxford Science Publications, 1998. 

\bibitem{Caz} T.~Cazenave, {\em Semilinear Schr\"{o}dinger Equations}. 
Courant Institute Lec. Notes 10, 2003, Published by the American
Mathematical Society.

\bibitem{K1} 
T.~Kato, Linear equations of hyperbolic type. J. Fac. Sc. Univ. Tokyo {\bf
17} (1970), 241--258.

\bibitem{K2}  
T.~Kato, Linear equations of hyperbolic type II. J. Math. Soc. Japan 
{\bf 25} (1973), 648--666.

\bibitem{J2}  J.W.~Jerome, {\em Approximation of Nonlinear Evolution
Equations}. Academic Press, 1983. 

\bibitem{J3} J.W.~Jerome, Approximate Newton methods and homotopy for stationary
operator equations. Constr. Approx. {\bf 1} (1985), 271--285. 

\bibitem{G} S.~Goldberg, Ranges and inverses of perturbed linear operators. 
Pacific J. Math. {\bf 9} (1959),
701--706.

\bibitem{H} 
R.S.~Hamilton, The inverse function theorem of Nash and Moser. Bull. Amer.
Math. Soc. (NS) {\bf 7} (1982), 65--222.

\bibitem{Rudin}
W.~Rudin, {\em Functional Analysis}, McGraw-Hill, 1973.

\bibitem{GT}
D.~Gilbarg and N.S.~Trudinger, {\em Elliptic Partial Differential
Equations of Second Order}. Grundlagen der mathematischen Wissenschaften
224, Springer-Verlag, 1977.

\bibitem{MF}
F.~Mera and S.~Fulling, Convergence of the Neumann series for the
Schr\"{o}dinger equation and general Volterra equations in Banach spaces.
In, {\em Advances in Quantum Mechanics}, Ch. 11, P.~Bracken, ed., InTech,
Rijeka, Croatia, 2013, 247--268.

\bibitem{CP} Z.~Chen and E.~Polizzi, Spectral-based propagation schemes  
for time-dependent quantum systems with applications to carbon nanotubes.
Phys. Rev. B {\bf 82} (2010), 205410 (8 pages).

\bibitem{CBDW}
G.~Ciaramella, A.~Borzi, G.~Dirr, and D.~Wachsmuth, Newton methods for the
optimal control of closed quantum spin systems. SIAM J. Sci. Comput. {\bf
37} (2015), no. 1, A319--A346.

\bibitem{Adams}
R.~Adams and J.~Fournier, {\it Sobolev Spaces}.
2nd. ed., Elsevier/Academic Press, 2003.

\end{thebibliography}
\end{document}